\newtheorem{lemma}{\bf Lemma}[section]
\newtheorem{theorem}{\bf Theorem}[section]
\newtheorem{remark}{\bf Remark}[section]
\newtheorem{example}{\bf Example}[section]
\newtheorem{proposition}{\bf Proposition}[section]
\newenvironment{breakablealgorithm}
  {
   \begin{center}
     \refstepcounter{algorithm}
     \hrule height.8pt depth0pt \kern2pt
     \renewcommand{\caption}[2][\relax]{
       {\raggedright\textbf{\ALG@name~\thealgorithm} ##2\par}%
       \ifx\relax##1\relax 
         \addcontentsline{loa}{algorithm}{\protect\numberline{\thealgorithm}##2}%
       \else 
         \addcontentsline{loa}{algorithm}{\protect\numberline{\thealgorithm}##1}%
       \fi
       \kern2pt\hrule\kern2pt
     }
  }{
     \kern2pt\hrule\relax
   \end{center}
  }
\journal{}
\begin{document}

\begin{frontmatter}



\title{An algorithm for finding the scaled basis of the supereigenvector space in max-plus algebra}

\author{Hui-li Wang\corref{corw}}
\author{Serge{\u{\i}} Sergeev\corref{cors}}

\cortext[corw]{School of Sciences, Southwest Petroleum University, Chengdu, Sichuan 610500, China. Email: huiliwang77@163.com.\\
Supported by the National Natural Science Foundation of China (No.11901486), Youth Science and Technology Innovation Team of SWPU for Nonlinear Systems (No. 2017CXTD02).}

\cortext[cors]{University of Birmingham, School of Mathematics, Edgbaston B15 2TT, UK. Email: s.sergeev@bham.ac.uk.  Corresponding author.}

\begin{abstract}
We present an algorithm for finding a basis of the supereigenvector space in max-plus algebra.  The main ideas of the new algorithm are: finding
better generators by exploiting the main operation of the tropical double description method and making use of the previously known extremality criteria.
\end{abstract}

\begin{keyword}
max-plus algebra \sep generator \sep extremal
\MSC 15A80 \sep 15A39
\end{keyword}
\end{frontmatter}

\section{Introduction}\label{intro}

Max-plus algebra is the linear algebra over max-plus semiring.  The latter semiring is the set $\overline{\mathbb{R}}:=\mathbb{R}\cup \{-\infty\}$ equipped with the operations $(\oplus, \otimes)$, where addition $a\oplus b=\max(a, b)$ and multiplication $a\otimes b=a+b$. Note that $-\infty$ is neutral for $\oplus$ and $0$ is neutral for $\otimes$.
Max-plus algebra has proved to be useful in a number of areas such as: automata
theory, scheduling theory and discrete event systems. See \cite{Baccelli1992,Peter2010,GondranMinoux2008,MPatWork,S.Kubo2018} for some
of these applications.

The operations $(\oplus, \otimes)$ are extended to matrices and vectors over $\overline{\mathbb{R}}$ in the usual way. Thus we have for matrices $A=(a_{ij})$ and $B=(b_{ij})$:
\begin{equation}
\begin{split}
& (A\oplus B)_{ij}=a_{ij}\oplus b_{ij},\quad (A\otimes B)_{ij}=\sum_{k}^{\oplus}a_{ik}\otimes
b_{kj}=\mbox{max}_{k}(a_{ik}+b_{kj}),\\
& (\lambda\otimes A)_{ij}=\lambda\otimes a_{ij}, \quad \lambda\in \overline{\mathbb{R}}
 \end{split}
 \end{equation}
 As usual, the first two operations require that $A$ and $B$ are of compatible sizes.

We denote by $I$ any square matrix whose diagonal entries are $0$ and off-diagonal ones are $-\infty$.
Throughout the paper we also denote by $-\infty$ any vector or matrix whose every component is $-\infty$, by abuse of notation.
Note that $-\infty$ is neutral for $\oplus$ and $I$ for $\otimes$. We also denote $N=\{1,2,\cdots, n\}$.

For $A\in \overline{\mathbb{R}}^{n\times n}$, the
problem of finding a vector
$\textbf{x}\in\overline{\mathbb{R}}^{n}$, $\textbf{x}\neq-\infty$ and a scalar
$\lambda\in\overline{\mathbb{R}}$ satisfying
\begin{eqnarray}\label{EqSup}
A\otimes\textbf{x}\geq\lambda\otimes\textbf{x}
\end{eqnarray}
is called the
(max-plus algebraic) supereigenproblem, the vector $\textbf{x}$ is called a
supereigenvector of $A$ associated with $\lambda$. If $\lambda=-\infty$, then $A\otimes\textbf{x}\geq\lambda\otimes\textbf{x}$ for any
$\textbf{x}\in\overline{\mathbb{R}}^{n}$, so we only need to consider the case of $\lambda\neq-\infty$ in \eqref{EqSup}. Note that in this case \eqref{EqSup} can be written as
\begin{eqnarray}\label{EqSupnolam}
A\otimes\textbf{x}\geq\textbf{x}
\end{eqnarray}
after multiplying both sides of \eqref{EqSup} by $\lambda^{-1}$ and rewriting $\lambda^{-1}\otimes A$ again as $A$. We will denote $\mathscr{X}=\{\textbf{x}\in\overline{\mathbb{R}}^{n}\colon
A\otimes\textbf{x}\geq\textbf{x}, \textbf{x}\neq-\infty\}$ and $\mathscr{X}'=\mathscr{X}\cup\{-\infty\}$ in the case of $\mathscr{X}\neq\emptyset$.  We call $\mathscr{X}'$ the supereigenspace of $A$ associated with $\lambda=0$. For $A\otimes \textbf{x}\geq \textbf{x}$ with $A\in \overline{\mathbb{R}}^{n\times n}$ and $\textbf{x}=(x_1, x_2, \cdots, x_n)^T\in\overline{\mathbb{R}}^{n}$, $A_{i}\otimes\textbf{x}\geq x_{i}$ stands for the $i$-inequality of the system $A\otimes \textbf{x}\geq \textbf{x}$, where $A_{i}$ denotes the $i$th row of $A$.

Note that the problem of finding all supereigenvectors \eqref{EqSupnolam} is in close relation to the problems of finding all max-plus subeigenvectors $\textbf{x}\colon A\otimes \textbf{x}\leq \textbf{x}$ and
all max-plus eigenvectors $\textbf{x}\colon A\otimes \textbf{x}=\textbf{x}$ (associated with $\lambda=0$.) However, these two problems are much easier to solve, see in particular the monograph of Butkovi\v{c} \cite{Peter2010}.

The problem of describing the supereigenvector space $\mathscr{X}'$ was posed by Butkovi\v{c}, Schneider and Sergeev \cite{Peter2012} and has attracted some attention since then.
It is a special case of a more general problem of describing the solution set of $A\otimes \textbf{x}\geq B\otimes \textbf{x}$ where $B=I$. A method for finding a set of generators for the solution set of $A\otimes \textbf{x}\geq B\otimes \textbf{x}$ was developed by Butkovi\v{c} and Heged\"{u}s~\cite{Peter1984}, and more recently, a much more efficient method was developed by Allamigeon, Gaubert and Goubault~\cite{Allamigeon2013} for finding all extremals of such solution set. However, as the system $A\otimes \textbf{x}\geq \textbf{x}$ is much more special, an original algorithm for finding all generators was suggested by Wang and Wang~\cite{wanghuili2014}, who also gave two algorithms for finding a finite proper supereigenvector for irreducible matrix and a finite solution of the system  \eqref{EqSupnolam} for reducible matrix \cite{wanghuili2015InformationSciences}. These algorithms are not in obvious relation with those of~\cite{Peter1984} and \cite{Allamigeon2013}. Based on the results in \cite{wanghuili2014}, Wang, Yang and Wang developed an improved algorithm which can be used to find a smaller set of generators \cite{wanghuili2020}. A set of generators for the solution set of \eqref{EqSupnolam} was also obtained by Krivulin~\cite{Nikolai2018} using an algebraic method called sparsification. A similar method was used by Krivulin~\cite{Nikolai2020} to obtain a generating set of $A\otimes \textbf{x}\geq B\otimes \textbf{x}$.

Butkovi\v{c} et al.~\cite{Peter2007} showed that any closed subspace of $\overline{\mathbb{R}}^n$ has the smallest scaled set of generators, which consists of scaled extremals of that subspace. Since the solution set of $A\otimes \textbf{x}\geq \textbf{x}$ is finitely generated and therefore closed, it is important to find the set of scaled extremals of it. 
Note that a general extremality criterion for a solution of $A\otimes \textbf{x}\geq B\otimes \textbf{x}$ was obtained by Allamigeon, Gaubert and Goubault~\cite{Allamigeon2013}. Being based on a notion of connectivity in hypergraphs, it requires $O(n^2)\times \alpha(n)$ where $\alpha(n)$ is the inverse of the Ackermann function. Sergeev and Wang \cite{SW-21} recently obtained a different set of criteria for the extremality of a solution of $A\otimes \textbf{x}\geq \textbf{x}$. The verification of their set of criteria does not require one to construct a hypergraph and has $O(n^2)$ complexity (thus not involving the Ackermann function in this special case).  A different approach was taken earlier by Sergeev~\cite{Sergeev2015} who  gave a combinatorial description of those generators of~\cite{wanghuili2014} that are extremal, as a set of criteria that can be verified in $O(n^2)$ time where $n$ is the dimension of $A$.  The criteria of~\cite{Sergeev2015} can be used to immediately decide about the extremality of any generator obtained by Wang and Wang~\cite{wanghuili2014}.

The purpose of this paper will be to develop an algorithm for finding all extremals of $\mathscr{X}'$, based on the earlier algorithms of Wang and Wang~\cite{wanghuili2014, wanghuili2020}. The resulting algorithm formally appears below as Algorithm~\ref{algorithm3}, but its essential parts are formulated earlier as Algorithm~\ref{algorithm1} and Algorithm~\ref{algorithm2}. Two main ideas of this development are: 1) use the main operation of the tropical double description method and any of the available extremality criteria to find a smaller number of generators with a more ``compact'' support, 2) prove that the resulting set of extremals actually generates the whole $\mathscr{X}'$ as all generators found by the algorithm of Wang and Wang~ \cite{wanghuili2020} (repeated here as Algorithm~\ref{algorithm4}) appear as
max-combinations of the ``new'' generators obtained in the present paper (Algorithm~\ref{algorithm3}). The latter fact is proved in Theorem~\ref{mainresult}.  Note that, although the main algorithm presented here uses the same main operation as the tropical double description method of Allamigeon, Gaubert and Goubault~\cite{Allamigeon2013}, it is essentially different, being more closely related to the algorithms of Wang and Wang~\cite{wanghuili2014, wanghuili2020} and Krivulin~\cite{Nikolai2018}.

The main algorithms and results of this paper (Algorithms~\ref{algorithm1}, \ref{algorithm2}, \ref{algorithm3} and Theorem~\ref{mainresult}) are presented in Section~\ref{s:main}.
Before this, we include Section~\ref{s:prel} explaining the main concepts and constructions of max-plus algebra that will be used.

\section{Preliminaries}
\label{s:prel}

Much of the techniques in the paper will be based on the connection between max-plus matrices and associated digraphs, which we next explain.
Given a square matrix $A=(a_{ij})\in \overline{\mathbb{R}}^{n\times n}$ the symbol
$D_{A}$ denotes the weighted directed graph $(N, E, w)$ with the node set $N$ and arc set
$E=\{(i, j): a_{ij}\neq-\infty\}$ and weight
$\omega(i,j) = a_{ij}$ for all $(i,j)\in E$.

A sequence of nodes $\pi=(i_{1},\cdots, i_{k})$ in $D_{A}$ is called a {\em path} (in $D_{A}$) if $(i_{j}, i_{j+1})\in E$ for all $j=1,\cdots,k-1$.
The length of such path $\pi$, denoted by $l(\pi)$, is $k-1$. The weight of such path $\pi$ is denoted by $\omega(\pi)$ and is equal to $a_{i_{1}i_{2}}+\cdots+a_{i_{k-1}i_{k}}$. If $\omega(\pi)\geq 0$ then $\pi$ is called a {\em nonnegative path.} Node $i_{1}$ is called the {\em starting node} and $i_{k}$ the {\em endnode} of $\pi$, respectively. A path from $i_{1}$ to $i_{k}$ can be called a $i_{1}-i_{k}$ path. Further $i_{k}$ is said to be {\em reachable} or {\em accessible} from $i_{1}$, notation $i_{1}\rightarrow i_{k}$.
If $i_{1}=i_{k}$ then the path $(i_{1},\cdots, i_{k})$ is called a {\em cycle}. Moreover, if $i_{p}\neq i_{q}$ for all $p, q=1,\cdots, k-1,
p\neq q$ then that cycle is called an {\em elementary cycle} (similarly, we can define {\em elementary path}). Cycle $(i_{1}, i_{2}=i_{1})$ is called a {\em loop}. 

The theory of generators, extremals and (weak) bases of subspaces in $\overline{\mathbb{R}}^{n}$ (also known as max cones or tropical cones) was described by Butkovi\v{c} et al.~\cite{Peter2007}, see also \cite{Peter2010}, and below we give some elements of it.

Let $S\subseteq\overline{\mathbb{R}}^{n}$. We say that $S$ is a {\em subspace} of $\mathbb{R}^n$ if $a\otimes\textbf{u}\oplus b\otimes\textbf{v}\in S$ for every $\textbf{u}, \textbf{v}\in S$ and $a, b\in \overline{\mathbb{R}}$.
A vector $\textbf{v}=(v_{1}, \cdots,
v_{n})^{T}\in\overline{\mathbb{R}}^{n}$ is called a {\em max-combination}
of $S$ if $\textbf{v}=\sum^{\oplus}_{\textbf{x}\in
S}\alpha_{\textbf{x}}\otimes \textbf{x}, \alpha_{\textbf{x}}\in
\overline{\mathbb{R}}$, where only a finite number of
$\alpha_{\textbf{x}}$ are real. Denote the set of all max-combinations of
$S$ by $\operatorname{span}(S)$. $S$ is called a set of {\em generators}
for $T$ if $\operatorname{span}(S)=T$. The set $S$ is called {\em (weakly) dependent} if $\textbf{v}$ is a
max-combination of $S\backslash\{\textbf{v}\}$ for some $\textbf{v}\in S$.
Otherwise $S$ is (weakly) independent. Let $S,
T\subseteq\overline{\mathbb{R}}^{n}$. If $S$ is an independent set of generators for $T$ then the set $S$ is called a basis
of $T$.

A vector $\textbf{v}\in S$ is called an {\em extremal} in $S$ if for all $\textbf{x}, \textbf{y}\in S$, $\textbf{v}=\textbf{x}\oplus \textbf{y}$ implies $\textbf{v}=\textbf{x}$ or $\textbf{v}=\textbf{y}$. A vector $\textbf{v}=(v_{1}, \cdots, v_{n})^{T}\in S$ ($\textbf{v}\neq -\infty$)is called {\em scaled} if $\|\textbf{v}\|=0$, where $\|\textbf{v}\|=\max_{i=1}^{n}v_{i}$.
The set $S$ is called scaled if all its elements are scaled. For vector $\textbf{v}=(v_{1}, \cdots, v_{n})^{T}\in \overline{\mathbb{R}}^{n}$ the support of $\textbf{v}$ is defined by
$$\text{Supp}(\textbf{v})=\{j\in N: v_{j}\in \mathbb{R}\}.$$

The following result explains the construction of a basis for a finitely generated subspaces of $\overline{\mathbb{R}}^n$.

\begin{proposition}[\bf Wagneur~\cite{Wagneur1991}, see also \cite{Peter2007,Peter2010}]
\label{Pro0.1}
If $T$ is a finitely generated subspace then its set of scaled extremals is nonempty and it is the unique scaled basis for $T$.
\end{proposition}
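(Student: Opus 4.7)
The plan is to prove the proposition in two stages: first establish that every scaled extremal of $T$ must belong to any scaled generating set of $T$, then exhibit one scaled generating set whose members are all extremals. Together these will force the set of scaled extremals to coincide with a concrete nonempty scaled basis, and uniqueness will follow at once.

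For the first stage, I would let $\mathbf{e}$ be a scaled extremal of $T$ and $S=\{\mathbf{s}_1,\ldots,\mathbf{s}_m\}$ any scaled generating set, writing $\mathbf{e}=\bigoplus_{i=1}^{m}\alpha_i\otimes \mathbf{s}_i$. A routine induction on $m$ extends the two-term clause in the definition of extremal to an arbitrary finite sum: if $\mathbf{v}=\bigoplus_{i=1}^{m}\mathbf{x}_i$ with $\mathbf{v}$ extremal in $T$, then $\mathbf{v}=\mathbf{x}_j$ for some $j$. Applying this with $\mathbf{x}_i=\alpha_i\otimes \mathbf{s}_i$ yields $\mathbf{e}=\alpha_j\otimes \mathbf{s}_j$, and $\|\mathbf{e}\|=\|\mathbf{s}_j\|=0$ forces $\alpha_j=0$, hence $\mathbf{e}=\mathbf{s}_j\in S$.

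For the second stage, I would start from a finite generating set of $T$, rescale each member to have max-norm $0$, and then iteratively drop any element that is a max-combination of the remaining ones. The procedure terminates at an independent scaled generating set $S^{*}=\{\mathbf{s}_1^{*},\ldots,\mathbf{s}_k^{*}\}$, and the task is to verify that each $\mathbf{s}_j^{*}$ is extremal in $T$. Assuming $\mathbf{s}_j^{*}=\mathbf{x}\oplus \mathbf{y}$ with $\mathbf{x},\mathbf{y}\in T$, I expand $\mathbf{x}=\bigoplus_i\alpha_i\otimes \mathbf{s}_i^{*}$ and $\mathbf{y}=\bigoplus_i\beta_i\otimes \mathbf{s}_i^{*}$, so that
\[
\mathbf{s}_j^{*}=\bigoplus_{i}(\alpha_i\oplus\beta_i)\otimes \mathbf{s}_i^{*}.
\]
Evaluating any coordinate $r$ with $(\mathbf{s}_j^{*})_r=0$ gives $\alpha_j\oplus\beta_j\leq 0$. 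If this inequality were strict, the $j$-th summand on the right would be coordinatewise dominated by the remaining ones on $\mathrm{Supp}(\mathbf{s}_j^{*})$ and trivially outside it, expressing $\mathbf{s}_j^{*}$ as a max-combination of $S^{*}\setminus\{\mathbf{s}_j^{*}\}$ and contradicting independence. Therefore $\alpha_j\oplus\beta_j=0$, and without loss of generality $\alpha_j=0$; then $\mathbf{x}\geq \alpha_j\otimes \mathbf{s}_j^{*}=\mathbf{s}_j^{*}$ while $\mathbf{x}\leq \mathbf{x}\oplus\mathbf{y}=\mathbf{s}_j^{*}$, so $\mathbf{x}=\mathbf{s}_j^{*}$.

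Combining the stages, $S^{*}$ is a nonempty scaled basis consisting entirely of extremals, and applying the first stage with $S=S^{*}$ shows every scaled extremal already lies in $S^{*}$, so the two sets coincide. Uniqueness of the scaled basis is then immediate: any scaled basis must contain every scaled extremal by the first stage, but cannot contain anything else without sacrificing independence, since the scaled extremals already generate $T$. I expect the main obstacle to be the coordinate-level bookkeeping in the second stage that rules out $\alpha_j\oplus\beta_j<0$; the rest of the argument is essentially formal, but that is the step where passing from $S^{*}$-independence to extremality in the ambient $T$ genuinely uses the scaling of $\mathbf{s}_j^{*}$.
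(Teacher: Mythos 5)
The paper does not prove Proposition~\ref{Pro0.1}: it is quoted from Wagneur and from Butkovi\v{c}--Schneider--Sergeev, so there is no internal proof to compare against. Your argument is correct and is, in substance, the standard proof from those references: Stage~1 (every scaled extremal lies in any scaled generating set, via the finite-sum extension of the two-term extremality clause) and Stage~2 (prune a finite scaled generating set to an independent one $S^{*}$ and show each of its members is extremal) are exactly the two halves of the classical argument, and the final uniqueness step follows as you say. Two minor points, neither of which affects validity: the phrase that the $j$-th summand is ``coordinatewise dominated'' when $\alpha_j\oplus\beta_j<0$ should be read as saying that on each coordinate of $\operatorname{Supp}(\mathbf{s}_j^{*})$ the maximum must then be attained by a term with $i\neq j$, which is what actually places $\mathbf{s}_j^{*}$ in $\operatorname{span}(S^{*}\setminus\{\mathbf{s}_j^{*}\})$; and the statement implicitly assumes $T\neq\{-\infty\}$, since the trivial subspace has no scaled vectors at all (for the supereigenspace $\mathscr{X}'$ considered in the paper this is guaranteed by $\mathscr{X}\neq\emptyset$).
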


By Proposition \ref{Pro0.1}, the set of scaled extremals in $\mathscr{X}'$ is the unique scaled basis for $\mathscr{X}'$.
For a moment, let us consider a general system $B\otimes \textbf{x}\leq A\otimes \textbf{x}$
with $A, B\in \overline{\mathbb{R}}^{p\times d}$. Such system consists of $p$ inequalities $B_j\otimes \textbf{x}\leq A_j\otimes \textbf{x}$, where $A_j$ and $B_j$ are the $j$th rows of matrices $A$ and $B$.
Let us recall the double description method of \cite{Allamigeon2013}, which is based on the following claim.  Here and below, we denote by $e^{i}$ the vector of $\overline{\mathbb{R}}^{n}$ whose $i$th entry is $0$ and other ones are $-\infty$, for
$i=1,\ldots,n$. This is the $i$th tropical unit vector.
\begin{proposition}[\bf\cite{Allamigeon2013}]
\label{AllTh14}
Let $\mathscr{C}\subset\overline{\mathbb{R}}^{d}$ be a tropical polyhedral cone defined as the set $\{\emph{\textbf{x}}\in \overline{\mathbb{R}}^{d} \mid B\otimes \emph{\textbf{x}}\leq A\otimes \emph{\textbf{x}}\}$, where $A, B\in \overline{\mathbb{R}}^{p\times d}$ (with $p\geq 0$). Let $V_{0}, \cdots, V_{p}$ be the sequence of finite subsets of $\overline{\mathbb{R}}^{d}$ defined as follows:
\begin{equation}\label{EqAllTh14}
\begin{split}
  V_{0} = &\{e^{i}\}_{i\in [d]},\\
 V_k =  &\{v\in V_{k-1} \mid B_k\otimes v\leq A_k\otimes v\}\cup\\
  & \{(B_k\otimes w)\otimes v\oplus(A_k\otimes v)\otimes w\mid  v, w\in V_{k-1},\\
  &\qquad\qquad\qquad\qquad
   B_k\otimes v\leq A_k\otimes v,\  B_k\otimes w> A_k\otimes w\},
\end{split}
\end{equation}
for all $k=1, \cdots, p.$ Then $\mathscr{C}$ is generated by the finite set $V_p$.
\end{proposition}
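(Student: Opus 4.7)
The plan is to prove by induction on $k$ that $V_k$ generates
\[
\mathscr{C}_k:=\{\textbf{x}\in\overline{\mathbb{R}}^d\mid B_j\otimes \textbf{x}\leq A_j\otimes \textbf{x}\text{ for all }j\leq k\},
\]
so that the case $k=p$ yields the desired statement. The base case $k=0$ is immediate: $\mathscr{C}_0=\overline{\mathbb{R}}^d$, and any vector is obviously a max-combination of the tropical unit vectors $e^1,\ldots,e^d$.

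Assuming $V_{k-1}$ generates $\mathscr{C}_{k-1}$, I first verify the easy inclusion $V_k\subseteq\mathscr{C}_k$. Vectors kept from $V_{k-1}$ already lie in $\mathscr{C}_{k-1}$ and satisfy the $k$th constraint by construction. For a new generator $u=(B_k\otimes w)\otimes v\oplus(A_k\otimes v)\otimes w$, membership in $\mathscr{C}_{k-1}$ follows from the fact that $\mathscr{C}_{k-1}$ is a subspace, and a short computation using $A_k v\geq B_k v$ and $B_k w>A_k w$ gives
\[
A_k\otimes u=(A_k v)\bigl((B_k w)\oplus(A_k w)\bigr)=(A_k v)(B_k w),
\]
\[
B_k\otimes u=(B_k w)\bigl((B_k v)\oplus(A_k v)\bigr)=(B_k w)(A_k v),
\]
so in fact $B_k\otimes u=A_k\otimes u$.

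For the reverse inclusion, take $\textbf{x}\in\mathscr{C}_k\subseteq\mathscr{C}_{k-1}$ and, by induction, write $\textbf{x}=\bigoplus_i \alpha_i v_i\oplus\bigoplus_j \beta_j w_j$ with $v_i$ ranging over $V_{k-1}^+:=\{v\in V_{k-1}: B_k v\leq A_k v\}$ and $w_j$ over $V_{k-1}\setminus V_{k-1}^+$. Since $\alpha_i(B_k v_i)\leq \alpha_i(A_k v_i)$, the inequality $B_k\textbf{x}\leq A_k\textbf{x}$ collapses to
\[
\bigoplus_j \beta_j(B_k w_j)\leq \bigoplus_i \alpha_i(A_k v_i)\oplus \bigoplus_j \beta_j(A_k w_j).
\]
The step I expect to be the main technical obstacle is to remove the second term on the right. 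Let $j^*$ maximize $\beta_j(B_k w_j)$; if $\beta_{j^*}(B_k w_{j^*})$ were not already dominated by $\bigoplus_i \alpha_i(A_k v_i)$, it would have to be dominated by some $\beta_{j'}(A_k w_{j'})<\beta_{j'}(B_k w_{j'})\leq \beta_{j^*}(B_k w_{j^*})$, a contradiction. Hence $\beta_j(B_k w_j)\leq\bigoplus_i \alpha_i(A_k v_i)$ for every $j$.

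Finally, for each $j$ with $\beta_j\neq-\infty$ I pick an index $i(j)$ such that $\beta_j(B_k w_j)\leq \alpha_{i(j)}(A_k v_{i(j)})$ (the right side is then finite, justifying the divisions below), and set $u_{i(j),j}=(B_k w_j)v_{i(j)}\oplus(A_k v_{i(j)})w_j\in V_k$ together with $\lambda_j=\beta_j\otimes(A_k v_{i(j)})^{-1}$. A one-line computation gives
\[
\lambda_j\otimes u_{i(j),j}=\bigl(\beta_j(B_k w_j)(A_k v_{i(j)})^{-1}\bigr)v_{i(j)}\oplus\beta_j w_j,
\]
and the coefficient on $v_{i(j)}$ does not exceed $\alpha_{i(j)}$ by the key inequality. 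Thus $\alpha_{i(j)}v_{i(j)}\oplus\beta_j w_j=\alpha_{i(j)}v_{i(j)}\oplus\lambda_j u_{i(j),j}$, and substituting these identities into the expansion of $\textbf{x}$ exhibits $\textbf{x}$ as a max-combination of elements of $V_k$, which closes the induction.
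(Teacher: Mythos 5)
Your proof is correct, but note that the paper itself does not prove Proposition~\ref{AllTh14} at all: it is quoted from \cite{Allamigeon2013}, and the only fragment the authors establish is Lemma~\ref{l:operation}, namely that the combination $(B_k\otimes w)\otimes v\oplus(A_k\otimes v)\otimes w$ stays in $\mathscr{X}_k$ when $v\in\mathscr{X}_k$ --- i.e.\ the forward half of your induction step (your version is in fact sharper, deducing $B_k\otimes u=A_k\otimes u$ from the extra hypothesis $B_k\otimes w>A_k\otimes w$, which the lemma does not assume). The substance you supply beyond the paper is the converse inclusion, and it checks out: the elimination of the terms $\beta_j\otimes(A_k\otimes w_j)$ from the right-hand side works because any $j'$ with $\beta_{j'}\otimes(A_k\otimes w_{j'})$ dominating the maximum would have $\beta_{j'}$ finite, whence $\beta_{j'}\otimes(A_k\otimes w_{j'})<\beta_{j'}\otimes(B_k\otimes w_{j'})$, a contradiction; and the rescaling $\lambda_j=\beta_j\otimes(A_k\otimes v_{i(j)})^{-1}$ is legitimate since the key inequality forces $\alpha_{i(j)}\otimes(A_k\otimes v_{i(j)})$, and hence $A_k\otimes v_{i(j)}$, to be finite whenever $\beta_j$ is. Absorbing the surplus coefficient on $v_{i(j)}$ into the existing term $\alpha_{i(j)}\otimes v_{i(j)}$ then rewrites $\textbf{x}$ over $V_k$, closing the induction. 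So your argument is a complete, self-contained proof of a statement the paper only cites, and it is consistent with (indeed subsumes) the one piece the paper does prove.
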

Not aiming to use this method in a straightforward way, we will make use of the main operation on which it is based.
Let us denote $\mathscr{X}_i=\{\textbf{x}\colon B_i\otimes \textbf{x}\leq A_i\otimes \textbf{x}\}$ and include a proof of the following fact, which is implicit in~\cite{Allamigeon2013}.
\begin{lemma}
\label{l:operation}
Let $\emph{\textbf{x}}\in\mathscr{X}_i$ and $\emph{\textbf{y}}\in\overline{\mathbb{R}}^{d}$. Then
$$
\emph{\textbf{z}}=(B_i\otimes \emph{\textbf{y}})\otimes \emph{\textbf{x}}\oplus (A_i\otimes \emph{\textbf{x}})\otimes \emph{\textbf{y}}
$$
also belongs to $\mathscr{X}_i$.
\end{lemma}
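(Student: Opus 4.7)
The plan is a direct computation exploiting the distributivity of tropical matrix-vector multiplication over $\oplus$ and over scalar multiplication. To keep the algebra transparent, I would introduce scalar shorthands $\alpha = A_i\otimes \textbf{x}$, $\beta = B_i\otimes \textbf{x}$, $\gamma = A_i\otimes \textbf{y}$, $\delta = B_i\otimes \textbf{y}$, all elements of $\overline{\mathbb{R}}$. The hypothesis $\textbf{x}\in \mathscr{X}_i$ translates to the single scalar inequality $\beta \leq \alpha$, which is the only nontrivial fact we have to work with.

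Next, I would substitute $\textbf{z} = \delta\otimes \textbf{x}\oplus \alpha\otimes \textbf{y}$ into $B_i\otimes \textbf{z}$ and $A_i\otimes \textbf{z}$ and pull the scalars outside. Since $B_i$ and $A_i$ act linearly in the max-plus sense, this gives
\begin{equation*}
B_i\otimes \textbf{z} = \delta\otimes \beta \;\oplus\; \alpha\otimes \delta, \qquad A_i\otimes \textbf{z} = \delta\otimes \alpha \;\oplus\; \alpha\otimes \gamma.
\end{equation*}
Because tropical multiplication is commutative, $\alpha\otimes \delta = \delta\otimes \alpha$, so the term $\alpha\otimes\delta$ on the left equals the first term on the right.

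Finally, to conclude $B_i\otimes \textbf{z}\leq A_i\otimes \textbf{z}$, I would note that $\beta\leq \alpha$ yields $\delta\otimes \beta\leq \delta\otimes \alpha$ by monotonicity of $\otimes$, and therefore
\begin{equation*}
B_i\otimes \textbf{z} = \delta\otimes \beta \oplus \delta\otimes \alpha \leq \delta\otimes \alpha \leq \delta\otimes \alpha \oplus \alpha\otimes \gamma = A_i\otimes \textbf{z},
\end{equation*}
which proves $\textbf{z}\in \mathscr{X}_i$.

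There is no real obstacle here; the entire content is the cancellation $\alpha\otimes\delta = \delta\otimes\alpha$ combined with $\beta\leq\alpha$. The only thing to be careful about is the case where some of $\alpha,\beta,\gamma,\delta$ equals $-\infty$, but the identities and inequalities above remain valid under the usual conventions $-\infty\otimes t = -\infty$ and $-\infty\oplus t = t$, so no separate case analysis is required.
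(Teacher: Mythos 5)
Your proof is correct and follows essentially the same route as the paper's: expand $B_i\otimes \textbf{z}$ and $A_i\otimes \textbf{z}$ by distributivity, observe that the cross term $(A_i\otimes \textbf{x})\otimes(B_i\otimes \textbf{y})$ equals the first term on the right by commutativity, and bound the remaining term using $B_i\otimes \textbf{x}\leq A_i\otimes \textbf{x}$. The scalar shorthands and the remark on the $-\infty$ conventions are harmless additions; nothing further is needed.
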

\begin{proof}
Substituting $\textbf{z}$ into $B_i\otimes \textbf{z}\leq A_i\otimes \textbf{z}$ and using distributivity we obtain:
\begin{equation}
\label{e:operation}
\begin{split}
&(B_i\otimes \textbf{y})\otimes (B_i\otimes \textbf{x})\oplus (A_i\otimes \textbf{x})\otimes (B_i\otimes \textbf{y})\\
&\leq (B_i\otimes \textbf{y})\otimes (A_i\otimes \textbf{x})\oplus (A_i\otimes \textbf{x})\otimes (A_i\otimes \textbf{y})
\end{split}
\end{equation}
Since $(B_i\otimes \textbf{y})\otimes (B_i\otimes \textbf{x})\leq (B_i\otimes \textbf{y})\otimes (A_i\otimes \textbf{x})$ (using $B_i\otimes \textbf{x}\leq A_i\otimes \textbf{x}$)
and $(A_i\otimes \textbf{x})\otimes (B_i\otimes \textbf{y})=(B_i\otimes \textbf{y})\otimes (A_i\otimes \textbf{x})$, we obtain that the left hand side of~\eqref{e:operation} does not exceed the first term on the right hand side, hence~\eqref{e:operation} holds.
\end{proof}

System \eqref{EqSupnolam} can be treated as a special case of the more general system considered above, if we rewrite it as
\begin{eqnarray}\label{Foralgori}
B\otimes\textbf{x}\leq A\otimes\textbf{x},\quad \text{with $B=I$}.
\end{eqnarray}

\section{Algorithm for finding all extremal generators in $\mathscr{X}'$}
\label{s:main}

In this section we will develop an algorithm which can be used to find the scaled basis of $\mathscr{X}'$.

Recall that for the system (\ref{Foralgori}) $\mathscr{X}_{i}=\{\textbf{x}=(x_1, x_2, \cdots, x_n)^T\in\overline{\mathbb{R}}^{n}:
A_{i}\otimes\textbf{x}\geq x_{i}\}$. In the rest of this section, unless stated otherwise, for $\textbf{x}\in\overline{\mathbb{R}}^{n}$ we will denote by $(\textbf{x})_{j}$ the $j$-th entry of $\textbf{x}$.

\begin{lemma}
\label{Lemmaforgenera2}
Take any $i\in N$. Then $e^{i}\in\mathscr{X}_{j}$ for any $j\in N\backslash\{i\}$.
\end{lemma}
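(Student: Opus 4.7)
The plan is to unpack the definitions and observe that the inequality becomes trivial once we compute both sides. Recall that $\mathscr{X}_j$ consists of vectors $\textbf{x}$ satisfying $A_j\otimes \textbf{x}\geq x_j$, i.e.\ $\max_{k} (a_{jk}+x_k)\geq x_j$. So to verify $e^i\in\mathscr{X}_j$ for $j\neq i$, I would simply substitute $\textbf{x}=e^i$ into this inequality.

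First I would compute the right-hand side: since $j\neq i$, the $j$th entry of $e^i$ is $-\infty$, so $(e^i)_j=-\infty$. Next I would observe that the left-hand side $A_j\otimes e^i=\max_k(a_{jk}+(e^i)_k)=a_{ji}$ (because $(e^i)_k=-\infty$ for $k\neq i$ and $(e^i)_i=0$). Thus the required inequality reads $a_{ji}\geq -\infty$, which holds regardless of the value of $a_{ji}\in\overline{\mathbb{R}}$.

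There is essentially no obstacle here; the statement is a direct consequence of the fact that the $j$th coordinate of the $i$th unit vector is $-\infty$ whenever $j\neq i$, so the constraint on the $j$th inequality is vacuous. I would present the proof in two or three lines.
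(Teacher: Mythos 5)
Your proof is correct and follows exactly the same route as the paper: the $j$th entry of $e^i$ is $-\infty$ for $j\neq i$, so the inequality $A_j\otimes e^i\geq (e^i)_j=-\infty$ holds vacuously. The explicit computation $A_j\otimes e^i=a_{ji}$ is a harmless extra detail the paper omits.
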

\begin{proof}
For any $j\in N, j\neq i$, the $j$-th entry of $e^{i}$ is $-\infty$. The claim then follows since $A_{j}\otimes e^{i}\geq -\infty$.
\end{proof}

\begin{lemma}
\label{Lemmaforgenera}
Let $\emph{\textbf{x}}=(x_1, x_2, \cdots, x_n)^T\in \overline{\mathbb{R}}^{n}$. If $\emph{\textbf{x}}\in \mathscr{X}_{i}$ for any $i\in \emph{\text{Supp}}(\emph{\textbf{x}})$, then $\emph{\textbf{x}}\in \mathscr{X}$.
\end{lemma}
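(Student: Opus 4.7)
The lemma is essentially an easy case-split on the support, so my plan is brief.

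The plan is to verify the defining inequality $A \otimes \textbf{x} \geq \textbf{x}$ coordinate by coordinate, i.e. to show $A_i \otimes \textbf{x} \geq x_i$ for every $i \in N$. The partition of $N$ into $\text{Supp}(\textbf{x})$ and $N \setminus \text{Supp}(\textbf{x})$ is the natural way to organise this.

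First I would handle the coordinates $i \in \text{Supp}(\textbf{x})$: here the inequality $A_i \otimes \textbf{x} \geq x_i$ is given directly by the hypothesis $\textbf{x} \in \mathscr{X}_i$, since that is the very definition of $\mathscr{X}_i$ recalled just above the lemma. Second, for coordinates $i \in N \setminus \text{Supp}(\textbf{x})$, by the definition of the support we have $x_i = -\infty$, and then $A_i \otimes \textbf{x} \geq -\infty = x_i$ holds trivially because $-\infty$ is the least element of $\overline{\mathbb{R}}$. Combining the two cases yields $A_i \otimes \textbf{x} \geq x_i$ for all $i \in N$, i.e. $A \otimes \textbf{x} \geq \textbf{x}$.

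The only subtlety (which I don't expect to be a real obstacle) is the nonemptiness condition $\textbf{x} \neq -\infty$ required by the definition of $\mathscr{X}$. If $\text{Supp}(\textbf{x}) = \emptyset$ the hypothesis is vacuous and $\textbf{x} = -\infty$, so strictly speaking the statement should be read as assuming $\textbf{x} \neq -\infty$ (or understood with the convention that $\mathscr{X}' = \mathscr{X} \cup \{-\infty\}$ was meant on the right-hand side). I would note this in a single line and otherwise the proof is just the two-line case split above.
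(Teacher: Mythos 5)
Your proof is correct and follows essentially the same route as the paper's: verify $A_i\otimes\textbf{x}\geq x_i$ coordinatewise, using the hypothesis on $\text{Supp}(\textbf{x})$ and the triviality $A_i\otimes\textbf{x}\geq -\infty = x_i$ off the support. Your remark about the degenerate case $\textbf{x}=-\infty$ is a fair (and correct) observation that the paper leaves implicit, but it does not change the argument.
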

\begin{proof}
If $\text{Supp}(\textbf{x})=N$, then we have that  $A_{i}\otimes\textbf{x}\geq x_{i}$ for all $i\in N$ and the statement follows.
If $\text{Supp}(\textbf{x})\subsetneqq N$, then $x_i=-\infty$ for any $i\in N\backslash\text{Supp}(\textbf{x})$ and
$A_i\otimes\textbf{x}\geq x_i=-\infty$ for all such $i$ as well, thus again $A_{i}\otimes\textbf{x}\geq x_{i}$ for all $i$.
\end{proof}

It is known that the system (\ref{Foralgori}) is solvable if and only if $\lambda(A)\geq0$ \cite{wanghuili2014}. We will therefore assume in the rest of this paper that $\lambda(A)\geq0$ for the system (\ref{Foralgori}).
Given nonnegative elementary cycle $\sigma=(i_{1}, i_{2}, \cdots, i_{t}, i_{t+1}=i_{1})$ in $D_{A}$, we denote $J_{\sigma}=\{j\in N: j\mbox{ } \text{in}\mbox{ }\sigma\}$. With different nodes in $J_{\sigma}$ as the starting nodes, the cycle $\sigma$ can be rewritten as $(j_{1}=i_{k}, j_{2}, \cdots, j_{t+1}=j_{1})$ for some $1\leq k\leq t$ and will be denoted by $\sigma(i_{k})$.

We will first show how Lemmas~\ref{l:operation},  \ref{Lemmaforgenera2} and \ref{Lemmaforgenera} can be used for finding one solution to the system \eqref{Foralgori} corresponding to a nonnegative elementary cycle in $D_{A}$.

Consider a nonnegative elementary cycle $\sigma=\sigma(i_{k})=(j_{1}=i_{k}, j_{2}, \cdots, j_{t+1}=j_{1})$ in $D_{A}$ and let $k$ $(1\leq k\leq t)$ be fixed. We now show that a solution to system \eqref{Foralgori} can be found by iterating at most $t$ times. Set $v^{1}=e^{j_{1}}$. If $v^{1}\in\mathscr{X}_{j_{1}}$, then $v^{1}\in\mathscr{X}$ by Lemma \ref{Lemmaforgenera} and the iteration stops. If
$v^{1}\notin \mathscr{X}_{j_{1}}$, then we set
$$v^{2}: =(B_{j_{1}}\otimes v^{1})\otimes e^{j_{2}}\oplus (A_{j_{1}}\otimes e^{j_{2}})\otimes v^{1}=
e^{j_{2}}\oplus (A_{j_{1}}\otimes e^{j_{2}})\otimes v^{1}.$$
The only finite components of this vector are $(v^2)_{j_1}=a_{j_{1}j_{2}}$ and $(v^2)_{j_2}=0$.

By Lemma~\ref{Lemmaforgenera2} we have $e^{j_{2}}\in \mathscr{X}_{j_{1}}$, and by
Lemma~\ref{l:operation} we have $v^{2}\in\mathscr{X}_{j_{1}}$.
If $v^{2}\in\mathscr{X}_{j_{2}}$, then Lemma \ref{Lemmaforgenera} implies that $v^{2}\in\mathscr{X}$ since $\text{Supp}(v^{2})=\{j_{1}, j_{2}\}$, and the iteration stops. If $v^{2}\notin\mathscr{X}_{j_{2}}$,
then we can set
\begin{equation*}
v^{3}:  =(B_{j_{2}}\otimes v^{2})\otimes e^{j_{3}}\oplus (A_{j_{2}}\otimes e^{j_{3}})\otimes v^{2}
=e^{j_{3}}\oplus (A_{j_{2}}\otimes e^{j_{3}})\otimes v^{2}.
\end{equation*}
The only finite components of this vector are $(v^3)_{j_1}=a_{j_{1}j_{2}}+a_{j_{2}j_{3}}$, $(v^3)_{j_2}=a_{j_{2}j_{3}}$ and $(v^3)_{j_3}=0$.

By Lemma \ref{Lemmaforgenera2} $e^{j_{3}}\in \mathscr{X}_{j_{1}}$ and $e^{j_{3}}\in \mathscr{X}_{j_{2}}$. Then we have $v^3\in \mathscr{X}_{j_{1}}$ since $e^{j_3}, v^2\in \mathscr{X}_{j_{1}}$.  Lemma~\ref{l:operation} implies that
$v^{3}\in\mathscr{X}_{j_{2}}$.
Next we continue to determine whether $v^{3}\in\mathscr{X}_{j_{3}}$ and similarly whether $v^p\in\mathscr{X}_{j_p}$ for any $p\leq t-1$.  More precisely, by $t$th iteration of this procedure we will have obtained vector $v^{1}=e^{j_{1}}\notin\mathscr{X}_{j_{1}}$ and (if $t\geq 3$) vectors $v^p$ for $2\leq p\leq t-1$  such that $v^p\in\mathscr{X}_{j_l}$ for $l\colon 1\leq l\leq p-1$, but $v^p\notin \mathscr{X}_{j_p}$. We then define
\begin{equation}
\label{e:vt}
\begin{split}
v^t: =(B_{j_{t-1}}\otimes v^{t-1})\otimes e^{j_{t}}\oplus (A_{j_{t-1}}\otimes e^{j_{t}})\otimes v^{t-1}
=e^{j_{t}}\oplus (A_{j_{t-1}}\otimes e^{j_{t}})\otimes v^{t-1}
\end{split}
\end{equation}
Using simple induction we can obtain the following finite components of $v^t$:
\begin{equation}
\label{e:vtcomps}
(v^t)_{j_t}=0,\quad (v^t)_{j_l}=a_{j_{l}j_{l+1}}+\ldots + a_{j_{t-1}j_{t}},\quad l=1,\ldots t-1.
\end{equation}
Since $e^{j_t}, v^{t-1}\in\mathscr{X}_{j_l}$ for $l\colon 1\leq l\leq t-2$, we obtain $v^t\in\mathscr{X}_{j_l}$ for $l\colon 1\leq l\leq t-2$.
Lemma~\ref{l:operation} implies that $v^t\in\mathscr{X}_{j_{t-1}}$.
We now also claim that $v^{t}\in\mathscr{X}_{j_{t}}$.
By~\eqref{e:vtcomps} we have
$(v^{t})_{j_{1}}=a_{j_{1}j_{2}}+a_{j_{2}j_{3}}+\cdots+a_{j_{t-1}j_{t}}$ and
$(v^{t})_{j_{t}}=0$. Thus
$$A_{j_{t}}\otimes v^{t}\geq a_{j_{t}j_{1}}+(v^{t})_{j_{1}}=a_{j_{t}j_{1}}+a_{j_{1}j_{2}}+a_{j_{2}j_{3}}+\cdots+a_{j_{t-1}j_{t}}=\omega(\sigma)\geq 0=(v^{t})_{j_{t}},$$
as claimed. Also observe that $\text{Supp}(v^{t})=J_{\sigma}$, and then by Lemma \ref{Lemmaforgenera} we have $v^{t}\in \mathscr{X}$. From the above discussion, for a nonnegative elementary cycle $\sigma=\sigma(i_{k})=(j_{1}=i_{k}, j_{2}, \cdots, j_{t+1}=j_{1})$ in $D_{A}$ we can always find a solution, say $v^{\sigma(i_{k})}$, to the system (\ref{Foralgori}) through no more than $t$ iterations.

Note that the criteria of Allamigeon et al.~\cite[Theorem1]{Allamigeon2013} or Sergeev and Wang~\cite[Theorem 3.1]{SW-21}
can then be used to perform the extremality test for $v^{\sigma(i_{k})}$ and eliminate non-extremal solution. In this way we can obtain at most $t$ extremal solutions to the system (\ref{Foralgori}) by taking all $1\leq k\leq t$ and denote the set of these extremal solutions (corresponding to nonnegative elementary cycle $\sigma=(i_{1}, i_{2}, \cdots, i_{t}, i_{t+1}=i_{1})$ in $D_{A}$) by $\mathscr{X}^{\sigma}$.

Based on the above discussion, we summarize the following algorithm to find the set of extremal solutions $\mathscr{X}^{\sigma}$ corresponding to the nonnegative elementary cycle $\sigma=(i_{1}, i_{2}, \cdots, i_{t}, i_{t+1}=i_{1})$ in $D_{A}$.

\begin{breakablealgorithm}
 \renewcommand{\algorithmicrequire}{\textbf{Input:}}
 \renewcommand{\algorithmicensure}{\textbf{Output:}}
 \caption{ \label{algorithm1}}
 \begin{algorithmic}[1]
 \REQUIRE  A nonnegative elementary cycle $\sigma=(i_{1}, i_{2}, \cdots, i_{t}, i_{t+1}=i_{1})$ in $D_{A}$.
 \STATE Set $\mathscr{X}^{\sigma}:=\emptyset$ and $k=1$. 
 \WHILE{$k\leq t$}
 \STATE Let $\sigma(i_{k})=(j_{1}=i_{k}, j_{2}, \cdots, j_{t+1}=j_{1})$.
 \STATE Set $v^{p}:=e^{j_{p}}$ and proceed with $p=1$.
 \WHILE{$p\leq t-1$ and $v^{p}\notin \mathscr{X}_{j_{p}}$}
 \STATE $v^{p+1}=e^{j_{p+1}}\oplus (A_{j_{p}}\otimes e^{j_{p+1}})\otimes v^{p}$.
 \STATE $p=p+1$.
 \ENDWHILE
 \STATE Set $v^{\sigma(i_{k})}: =v^{p}$ and do extremality test for $v^{\sigma(i_{k})}$.
 \IF {$v^{\sigma(i_{k})}$ is an extremal}
 \STATE $\mathscr{X}^{\sigma}:=\mathscr{X}^{\sigma}\cup\{\parallel v^{\sigma(i_{k})}\parallel^{-1}\otimes v^{\sigma(i_{k})}\}$.
 \ENDIF
 \STATE $k=k+1$.
 \ENDWHILE
 \ENSURE $\mathscr{X}^{\sigma}$.
 \end{algorithmic}
\end{breakablealgorithm}

Given nonnegative elementary cycle $\sigma=(i_{1}, i_{2}, \cdots, i_{t}, i_{t+1}=i_{1})$ in $D_{A}$. Recall that
a path $(l_{1}, l_{2},
\cdots, l_{m})$ is called a maximum $J_{\sigma}$-path \cite{wanghuili2014} if the following three conditions are satisfied:
(1) $l_{p}\neq l_{q}$
for any $1\leq p\neq q\leq m$,
(2) the endnode $l_{m}$ is the
unique node which belongs to $J_{\sigma}$ in the path,
(3) there are no paths from $l$ 
to $l_{1}$ in $D_{A}$ for any $l\in N\backslash (J_{\sigma}\cup\{l_{1}, l_{2}, \cdots,
l_{m}\})$. Next we show how to efficiently find some extremal solutions to the system (\ref{Foralgori}) by using Lemma~\ref{l:operation} corresponding to some maximum $J_{\sigma}$-path. We start with a simple but useful statement:

\begin{lemma}
\label{Lemmaforgenera3}
Let $\emph{\textbf{x}}=(x_1, x_2, \cdots, x_n)^T\in \mathscr{X}$ with $\emph{\text{Supp}}(\emph{\textbf{x}})\subsetneqq N$ be such that $a_{ji}\neq -\infty$ for some $j\in N\backslash\emph{\text{Supp}}(\emph{\textbf{x}})$ and $i\in \emph{\text{Supp}}(\emph{\textbf{x}})$, and consider $\emph{\textbf{y}}:= \emph{\textbf{x}}\oplus (A_j\otimes \emph{\textbf{x}})\otimes e^j$.  Then
\begin{itemize}
\item[{\rm (i)}]  $\emph{\textbf{y}}\in \mathscr{X}$;
\item[{\rm (ii)}] If $\emph{\textbf{x}}$ is not an extremal of $\mathscr{X}$ or if $e^j\in\mathscr{X}_j$, then $\emph{\textbf{y}}$ is not an extremal
of $\mathscr{X}$.
\end{itemize}
\end{lemma}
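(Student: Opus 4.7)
The plan is to prove (i) by a direct row-by-row inspection of $A\otimes \textbf{y}\geq \textbf{y}$, and to prove (ii) in each of its two sub-cases by exhibiting $\textbf{y}$ as an explicit non-trivial max-sum of two elements of $\mathscr{X}$ (so that neither summand equals $\textbf{y}$).

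For (i), I first observe that $\textbf{y}$ coincides with $\textbf{x}$ outside position $j$, while $y_j=A_j\otimes \textbf{x}$ (using $x_j=-\infty$ because $j\notin\text{Supp}(\textbf{x})$, and noting that $y_j$ is finite since both $a_{ji}$ and $x_i$ are finite). Since then $\textbf{y}\geq \textbf{x}$ componentwise, for any $k\neq j$ we have $A_k\otimes \textbf{y}\geq A_k\otimes \textbf{x}\geq x_k=y_k$ by monotonicity and $\textbf{x}\in\mathscr{X}$, while for $k=j$ we have $A_j\otimes \textbf{y}\geq A_j\otimes \textbf{x}=y_j$. Alternatively, Lemma~\ref{l:operation} with $B=I$ applied to the $j$th inequality and the vector $e^j$ gives $\textbf{y}\in\mathscr{X}_j$ at once, and the remaining inequalities again follow from $\textbf{y}\geq \textbf{x}$.

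For (ii) in the case $e^j\in\mathscr{X}_j$: Lemma~\ref{Lemmaforgenera2} gives $e^j\in\mathscr{X}_k$ for $k\neq j$, so Lemma~\ref{Lemmaforgenera} lifts $e^j$ to an element of $\mathscr{X}$. Then $\textbf{y}=\textbf{x}\oplus\bigl((A_j\otimes \textbf{x})\otimes e^j\bigr)$ is a max-sum of two vectors in $\mathscr{X}$ (the second is a finite scalar multiple of $e^j$ since $A_j\otimes \textbf{x}$ is finite, and $\mathscr{X}\cup\{-\infty\}$ is a subspace). I then check neither summand equals $\textbf{y}$: $\textbf{x}$ differs from $\textbf{y}$ at coordinate $j$, and $(A_j\otimes \textbf{x})\otimes e^j$ has support $\{j\}$ whereas $\text{Supp}(\textbf{y})\supseteq\text{Supp}(\textbf{x})$ contains $i\neq j$.

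For (ii) in the case where $\textbf{x}$ is not extremal, decompose $\textbf{x}=\textbf{u}\oplus \textbf{v}$ with $\textbf{u},\textbf{v}\in\mathscr{X}$ and $\textbf{u},\textbf{v}\neq \textbf{x}$. Using distributivity of $\otimes$ over $\oplus$, I rewrite
$$\textbf{y}=\textbf{u}'\oplus \textbf{v}',\quad\text{where}\quad \textbf{u}'=\textbf{u}\oplus(A_j\otimes \textbf{u})\otimes e^j,\ \textbf{v}'=\textbf{v}\oplus(A_j\otimes \textbf{v})\otimes e^j.$$
The row-by-row verification in (i) requires no hypothesis on the support of the base vector, so the same argument yields $\textbf{u}',\textbf{v}'\in\mathscr{X}$. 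The main obstacle is then showing that neither $\textbf{u}'$ nor $\textbf{v}'$ equals $\textbf{y}$: since $\textbf{u}\leq \textbf{x}$ forces $u_j\leq x_j=-\infty$, an equality $\textbf{u}'=\textbf{y}$ would entail $u_k=y_k=x_k$ for every $k\neq j$ together with $u_j=-\infty=x_j$, hence $\textbf{u}=\textbf{x}$, contradicting the choice of $\textbf{u}$; the same argument applies to $\textbf{v}'$.
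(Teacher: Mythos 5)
Your proof is correct and follows essentially the same route as the paper's: part (i) amounts to verifying the row inequalities (the paper routes the $j$th row through Lemma~\ref{l:operation}, which you also note as an alternative), and part (ii) uses exactly the same two decompositions of $\textbf{y}$ — the disjoint-support splitting when $e^j\in\mathscr{X}_j$, and the induced splitting $\textbf{y}=\textbf{u}'\oplus\textbf{v}'$ when $\textbf{x}$ is not extremal. If anything, your argument that $\textbf{u}'\neq\textbf{y}$ and $\textbf{v}'\neq\textbf{y}$ (via $u_j\leq x_j=-\infty$) is more detailed than the paper's, which dismisses this step as obvious.
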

\begin{proof}
(i): Observing that $\textbf{y}=(B_j\otimes e^j)\otimes \textbf{x}\oplus (A_j\otimes \textbf{x})\otimes e^j,$ we first establish that
$\textbf{y}\in\mathscr{X}_i$ for $i\in\text{Supp}(\textbf{x})$ (since $\textbf{x}\in\mathscr{X}_i$ and $e^j\in\mathscr{X}_i$)
and then apply Lemma~\ref{l:operation} to establish that $\textbf{y}\in\mathscr{X}_j$.

(ii): If $e^j\in\mathscr{X}_j$ then $e^j\in\mathscr{X}$ and, since the supports of $\textbf{x}$ and $e^j$ are disjoint, this implies that $\textbf{y}$ is not an extremal. If $\textbf{x}$ is not an extremal, then we can write it as $\textbf{x}=\textbf{x}_1\oplus \textbf{x}_2$
where both $\textbf{x}^1$ and $\textbf{x}^2$ are different from $\textbf{x}$ and belong to $\mathscr{X}$.
Then
\begin{align*}
\textbf{y}&=\textbf{x}\oplus(A_{j}\otimes\textbf{x})\otimes e^{j}\\
&=\textbf{x}^{1}\oplus\textbf{x}^{2}\oplus
(A_{j}\otimes (\textbf{x}^{1}\oplus\textbf{x}^{2}))\otimes e^{j}\\
&=\textbf{x}^{1}\oplus \textbf{x}^{2}
\oplus(A_{j}\otimes\textbf{x}^{1})\otimes e^{j}\oplus(A_{j}\otimes\textbf{x}^{2})\otimes e^{j}
\end{align*}
Denote
\begin{align*}
 \textbf{y}^{1}=\textbf{x}^{1}\oplus(A_{j}\otimes\textbf{x}^{1})\otimes e^j
 =(B_{j}\otimes e^{j})\otimes\textbf{x}^{1}\oplus(A_{j}\otimes\textbf{x}^{1})\otimes e^{j},\\
 \textbf{y}^{2}=\textbf{x}^{2}\oplus(A_{j}\otimes\textbf{x}^{2})\otimes e^j
 =(B_{j}\otimes e^{j})\otimes\textbf{x}^{2}\oplus(A_{j}\otimes\textbf{x}^{2})\otimes e^j
 \end{align*}
 Then $\textbf{y}=\textbf{y}^{1}\oplus\textbf{y}^{2}$.   As in part (i), using Lemma~\ref{l:operation} we establish that $\textbf{y}^{1}\in \mathscr{X}$ and $\textbf{y}^{2}\in\mathscr{X}$. Also, $\textbf{x}_1\neq\textbf{x}$ and $\textbf{x}_2\neq\textbf{x}$ obviously imply (in this case) that
$\textbf{y}_1\neq\textbf{y}$ and $\textbf{y}_2\neq\textbf{y}$.
\end{proof}

\if{
If there exist $j\in N-\text{Supp}(\textbf{x})$ such that
$a_{ji}\neq-\infty$ for some $i\in \text{Supp}(\textbf{x})$, then take any such $j$
and denote $\textbf{y}=(B_{j}\otimes\epsilon^{j})\otimes\textbf{x}\oplus(A_{j}\otimes\textbf{x})\otimes\epsilon^{j}=(y_1, y_2, \cdots, y_n)^T$.
Since $a_{ji}\neq-\infty$ for some $i\in \text{Supp}(\textbf{x})$ we have that $A_{j}\otimes\textbf{x}$ is finite. Suppose that $A_{j}\otimes\textbf{x}=a_{ji'}+x_{i'}$ with some $i'\in \text{Supp}(\textbf{x})$.
Then the construction of vector $\textbf{y}$ (recall that $B=I$) implies that $y_{k}=x_{k}$ for all $k\neq j$, $k\in N$ and $y_{j}=a_{ji'}+x_{i'}$ (since $x_{j}=-\infty$). Thus it is sufficient to prove that $A_{j}\otimes\textbf{y}\geq y_{j}$. In fact, $A_{j}\otimes\textbf{y}\geq a_{ji'}+y_{i'}=a_{ji'}+x_{i'}= y_{j}$. Hence the statement.
}\fi
\if{
\begin{remark}\hspace{-0.6em}\textbf{.}\label{Remark1}
For convenience we still use all the symbols in the proof of Lemma \ref{Lemmaforgenera3}.
Take one fixed $j$ satisfying the condition in Lemma \ref{Lemmaforgenera3}. If $\emph{\textbf{x}}$ is not an extremal or $\epsilon^{j}\in \mathscr{X}_{j}$, then $\emph{\textbf{y}}=(B_{j}\otimes\epsilon^{j})\otimes\emph{\textbf{x}}\oplus(A_{j}\otimes\emph{\textbf{x}})\otimes\epsilon^{j}$ is not an extremal.
In the first case there exist $\emph{\textbf{x}}^{1}, \emph{\textbf{x}}^{2}\in\mathscr{X}^{\leq\emph{\textbf{x}}}$ such that $\emph{\textbf{x}}=\emph{\textbf{x}}^{1}\oplus\emph{\textbf{x}}^{2}$. Then
\begin{align*}
\emph{\textbf{y}}&=(B_{j}\otimes\epsilon^{j})\otimes\emph{\textbf{x}}\oplus(A_{j}\otimes\emph{\textbf{x}})\otimes\epsilon^{j}\\
&=(B_{j}\otimes\epsilon^{j})\otimes(\emph{\textbf{x}}^{1}\oplus\emph{\textbf{x}}^{2})\oplus(A_{j}\otimes
(\emph{\textbf{x}}^{1}\oplus\emph{\textbf{x}}^{2}))\otimes\epsilon^{j}\\
&=(B_{j}\otimes\epsilon^{j})\otimes\emph{\textbf{x}}^{1}\oplus(B_{j}\otimes\epsilon^{j})\otimes\emph{\textbf{x}}^{2}
\oplus(A_{j}\otimes\emph{\textbf{x}}^{1})\otimes\epsilon^{j}\oplus(A_{j}\otimes\emph{\textbf{x}}^{2})\otimes\epsilon^{j}
\end{align*}
Denote $\emph{\textbf{y}}^{1}=(B_{j}\otimes\epsilon^{j})\otimes\emph{\textbf{x}}^{1}\oplus(A_{j}\otimes\emph{\textbf{x}}^{1})\otimes\epsilon^{j}$ and $\emph{\textbf{y}}^{2}=(B_{j}\otimes\epsilon^{j})\otimes\emph{\textbf{x}}^{2}\oplus(A_{j}\otimes\emph{\textbf{x}}^{2})\otimes\epsilon^{j}$. Then $\emph{\textbf{y}}=\emph{\textbf{y}}^{1}\oplus\emph{\textbf{y}}^{2}$. Obviously, we have $\emph{\textbf{y}}^{1}\in \mathscr{X}$ and
$\emph{\textbf{y}}^{2}\in\mathscr{X}$. Furthermore, according to the discussion on the construction of \emph{\textbf{y}} (similarly for $\emph{\textbf{y}}^{1}$ and $\emph{\textbf{y}}^{2}$) in the proof of Lemma \ref{Lemmaforgenera3}, we can see that $\emph{\textbf{y}}\neq\emph{\textbf{y}}^{1}$ and $\emph{\textbf{y}}\neq\emph{\textbf{y}}^{2}$ since $\emph{\textbf{x}}\neq\emph{\textbf{x}}^{1}$ and $\emph{\textbf{x}}\neq\emph{\textbf{x}}^{2}$. Thus $\emph{\textbf{y}}$ is not an extremal. In the second case, that is, $\epsilon^{j}\in \mathscr{X}_{j}$,  we have $\epsilon^{j}\in \mathscr{X}$ by Lemma \ref{Lemmaforgenera}. This also means that $\emph{\textbf{y}}$ is not an extremal.
\end{remark}
}\fi

Given a maximum $J_{\sigma}$-path $\pi=(l_{1}, l_{2},
\cdots, l_{m})$ in $D_{A}$, where nonnegative elementary cycle $\sigma=(i_{1}, i_{2}, \cdots, i_{t}, i_{t+1}=i_{1})$ in $D_{A}$ and $l_{m}=i_{k}$ for some $1\leq k\leq t$,
we can obtain one solution $v^{\sigma(i_{k})}$ to \eqref{Foralgori} for $\sigma=\sigma(i_{k})=(j_{1}=i_{k}, j_{2}, \cdots, j_{t+1}=j_{1})$ by using Algorithm \ref{algorithm1}. Obviously such $v^{\sigma(i_{k})}$ and $l_{m-1}$ satisfy the conditions of Lemma~\ref{Lemmaforgenera3}, which implies that
$v^{\sigma(i_{k})}\oplus(A_{l_{m-1}}\otimes v^{\sigma(i_{k})})\otimes e^{l_{m-1}}\in \mathscr{X}$. Continue to work with
$v^{\sigma(i_{k})}\oplus(A_{l_{m-1}}\otimes v^{\sigma(i_{k})})\otimes e^{l_{m-1}}$ and $l_{m-2}$ and so on. On the other hand, in the above process of constructing solution to the system \eqref{Foralgori}, once one of the situations described in
Lemma~\ref{Lemmaforgenera3} part (ii) occurs, the process stops. For every such constructed solution we can perform an extremality test and denote by $\mathscr{X}^{\pi-\sigma}$ the extremal solutions set obtained by using above mentioned method corresponding to maximum $J_{\sigma}$-path $\pi$ in $D_{A}$.

Formally, this process can be described by the following algorithm:

\begin{breakablealgorithm}
 \renewcommand{\algorithmicrequire}{\textbf{Input:}}
 \renewcommand{\algorithmicensure}{\textbf{Output:}}
 \caption{ \label{algorithm2}}
 \begin{algorithmic}[1]
 \REQUIRE  A maximum $J_{\sigma}$-path $\pi=(l_{1}, l_{2}, \cdots, l_{m})$ in $D_{A}$, where $\sigma=(i_{1}, i_{2}, \cdots, i_{t}, i_{t+1}=i_{1})$ is a nonnegative elementary cycle in $D_{A}$ and $l_{m}=i_{k}$ for some $1\leq k\leq t$ and $v^{\sigma(i_k)}$ is an extremal returned by Algorithm 1.
 \STATE Set $\mathscr{X}^{\pi-\sigma}: =\emptyset$.  Let $v^{l_{m}}=v^{\sigma(i_{k})}$ and proceed with $q=1$.
 \WHILE{$q\leq m-1$}
 \IF{$e^{l_{m-q}}\notin \mathscr{X}_{l_{m-q}}$}
 \STATE $v^{l_{m-q}}= v^{l_{m-q+1}}\oplus (A_{l_{m-q}}\otimes v^{l_{m-q+1}})\otimes e^{l_{m-q}}$.
 \STATE Do extremality test for $v^{l_{m-q}}$.
 \IF {$v^{l_{m-q}}$ is an extremal}
 \STATE $\mathscr{X}^{\pi-\sigma}:=\mathscr{X}^{\pi-\sigma}\cup\{\parallel v^{l_{m-q}}\parallel^{-1}\otimes v^{l_{m-q}}\}$.
 \ELSE
 \STATE 
 return $\mathscr{X}^{\pi-\sigma}$.
 \ENDIF
 \ELSE
 \STATE 
 return $\mathscr{X}^{\pi-\sigma}$.
 \ENDIF
 \STATE $q=q+1$.
 \ENDWHILE
 \ENSURE $\mathscr{X}^{\pi-\sigma}$.
 \end{algorithmic}
\end{breakablealgorithm}


When we implement Algorithms \ref{algorithm1} and \ref{algorithm2} for all nonnegative elementary cycles and all maximum $J_{\sigma}$-paths in $D_{A}$, we can obtain a set of scaled extremal solutions, denoted by $\mathscr{X}_{E}$, which can be computed by the following algorithm:

\begin{breakablealgorithm}
 \renewcommand{\algorithmicrequire}{\textbf{Input:}}
 \renewcommand{\algorithmicensure}{\textbf{Output:}}
 \caption{ \label{algorithm3}}
 \begin{algorithmic}[1]
 \REQUIRE  $A\in\overline{\mathbb{R}}^{n\times n}$.
 \STATE Let $\zeta=\{\sigma: \sigma \mbox{ is a nonnegative elementary
cycle in } D_{A}\}$ and $\mathscr{X}_{E}:=\emptyset$.
 \WHILE {$\zeta\neq\emptyset$}
 \STATE Take
$\sigma=(i_{1},\cdots,i_{t},i_{t+1}=i_{1})\in\zeta$.
 \STATE Apply Algorithm \ref{algorithm1} to $\sigma$ and set $\mathscr{X}_{E}: =\mathscr{X}_{E}\cup\mathscr{X}^{\sigma}$.
 \STATE Set $\vartheta=\{\pi: \pi \mbox{ is a maximum }J_{\sigma}\mbox{-}path\mbox{ in } D_{A}\}$.
 \WHILE {$\vartheta\neq\emptyset$}
 \STATE Take $\pi=(l_{1}, l_{2}, \cdots, l_{m})\in\vartheta$ with $l_{m}=i_{k}$ for some $k$ $(1\leq k\leq t)$.
 \IF{$v^{\sigma(i_k)}$ is an extremal}
 \STATE Apply Algorithm \ref{algorithm2} to $\pi$ and set $\mathscr{X}_{E}: =\mathscr{X}_{E}\cup\mathscr{X}^{\pi-\sigma}$.
 \ENDIF
 \STATE $\vartheta: =\vartheta\backslash\{\pi\}$.
 \ENDWHILE
 \STATE $\zeta: =\zeta\backslash\{\sigma\}$.
 \ENDWHILE
 \ENSURE $\mathscr{X}_{E}$.
 \end{algorithmic}
\end{breakablealgorithm}


If the steps of extremality test in Algorithms \ref{algorithm1} and \ref{algorithm2} are omitted, then we can obtain a set of solutions to the system (\ref{Foralgori}) by Algorithm \ref{algorithm3} and denoted it by $\mathscr{X}_{G}$. Denote $\mathscr{X}_{SG}=\{\parallel \textbf{x}\parallel^{-1}\otimes \textbf{x}: \textbf{x}\in\mathscr{X}_{G}\}$. Obviously $\mathscr{X}_{E}$ consists precisely of vectors of $\mathscr{X}_{SG}$ which are extremals of $\mathscr{X}'$.
In fact, $\mathscr{X}_{E}$ is also the scaled extremal set of $\mathscr{X}'$, that is, $\mathscr{X}_{E}$ is the scaled basis of $\mathscr{X}'$.

We recall two algorithms in \cite{wanghuili2020} that are needed for proof of the next result, Theorem \ref{mainresult}. Note that the following algorithm is obtained by combining Algorithm 3.1 and Algorithm 4.1 in \cite{wanghuili2020}, where $\mathscr{X}_{A}$ denote a generating set for $\mathscr{X}'$ and $\textbf{x}[j; a]$ denote the vector in
which the $j$th entry is $a$ and all other components are
$-\infty$.

\begin{breakablealgorithm}
\renewcommand{\algorithmicrequire}{\textbf{Input:}}
\renewcommand{\algorithmicensure}{\textbf{Output:}}
 \caption{ \label{algorithm4}}
 \begin{algorithmic}[1]
 \REQUIRE  $A\in\overline{\mathbb{R}}^{n\times n}$.
 \STATE Let $\zeta=\{\sigma: \sigma \mbox{ is a nonnegative elementary
cycle in } D_{A}\}$ and $\mathscr{X}_{A}:=\emptyset$.
 \WHILE {$\zeta\neq\emptyset$}
 \STATE Take
$\sigma=(i_{1},\cdots,i_{t},i_{t+1}=i_{1})\in\zeta$.
 \STATE Set $\textbf{x}_{j}=((\textbf{x}_{j})_1,
(\textbf{x}_{j})_2, \cdots,
(\textbf{x}_{j})_n)^T=(-\infty, -\infty, \cdots,
-\infty)^T$ for all $1\leq j\leq t$.
 \STATE Compute $\omega(\sigma)$ and proceed with $j=1$.
 \WHILE {$j\leq t$}
 \STATE Set $(\textbf{x}_{j})_{i_{1}}:=0$, and let
$\alpha_{j}=\omega(\sigma)$ and
$\alpha_{l}=0$ for all $1\leq l\leq t$ and $l\neq j$, and proceed with $s=1$.
 \WHILE {$s\leq t-1$}
 \STATE $(\textbf{x}_{j})_{i_{s+1}}:=(\textbf{x}_{j})_{i_{s}}+\alpha_{s}-a_{i_{s}i_{s+1}}$.
 \STATE $s=s+1$.
 \ENDWHILE
 \STATE Set $\mathscr{X}_{A}:=\mathscr{X}_{A}\cup\{\textbf{x}_{j}\}$.
 \STATE $j=j+1$.
 \ENDWHILE
 \STATE Set $\vartheta=\{\pi: \pi \mbox{ is a maximum }J_{\sigma}\mbox{-}path\mbox{ in } D_{A}\}$.
 \WHILE {$\vartheta\neq\emptyset$}
 \STATE Take $\pi=(l_{1}, l_{2}, \cdots, l_{m})\in\vartheta$ with $l_{m}=i_{k}$ for some $k$ $(2\leq k\leq t+1)$.
 \STATE $\textbf{x}:=\textbf{x}_{k-1}$, $c:=(\textbf{x}_{k-1})_{i_{k}}$ and proceed with $p=1$.
 \WHILE {$p\leq m-1$}
 \STATE $c :=c+a_{l_{m-p}l_{m-p+1}}$, $\textbf{x} :=\textbf{x}\oplus\textbf{x}[l_{m-p}; c]$.
 \STATE Set $\mathscr{X}_{A}:=\mathscr{X}_{A}\cup\{\textbf{x}\}$.
 \STATE $p=p+1$.
 \ENDWHILE
 \STATE $\vartheta: =\vartheta\backslash\{\pi\}$.
 \ENDWHILE
 \STATE $\zeta: =\zeta\backslash\{\sigma\}$.
 \ENDWHILE
 \ENSURE $\mathscr{X}_{A}$.
 \end{algorithmic}
\end{breakablealgorithm}

\begin{theorem}
\label{mainresult}
Let $A\in\overline{\mathbb{R}}^{n\times n}$ with $\lambda(A)\geq0$. Then $\mathscr{X}_{E}$ is the scaled basis of $\mathscr{X}'$.
\end{theorem}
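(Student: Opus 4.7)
The plan is to invoke Proposition~\ref{Pro0.1}, which reduces the problem to showing that $\mathscr{X}_E$ equals the set of scaled extremals of $\mathscr{X}'$. By the construction of Algorithm~\ref{algorithm3}, every vector in $\mathscr{X}_E$ is scaled, lies in $\mathscr{X}'$, and has passed the extremality test, so the inclusion $\mathscr{X}_E \subseteq \{\text{scaled extremals of } \mathscr{X}'\}$ is immediate. For the reverse inclusion I would establish the key intermediate claim that $\mathscr{X}_{SG}$ generates $\mathscr{X}'$: once this is known, uniqueness of the scaled basis in Proposition~\ref{Pro0.1} forces every scaled extremal of $\mathscr{X}'$ to lie in $\mathscr{X}_{SG}$, and hence in $\mathscr{X}_E$, which by definition collects precisely those elements of $\mathscr{X}_{SG}$ that are extremal in $\mathscr{X}'$.

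To prove that $\mathscr{X}_G$ (equivalently $\mathscr{X}_{SG}$) generates $\mathscr{X}'$, I would use the result from~\cite{wanghuili2020} that Algorithm~\ref{algorithm4} outputs a generating set $\mathscr{X}_A$ of $\mathscr{X}'$ and show $\mathscr{X}_A \subseteq \operatorname{span}(\mathscr{X}_G)$. Two classes of vectors must be covered: the cycle vectors $\textbf{x}_j$ produced in the inner loop of Algorithm~\ref{algorithm4} for each nonnegative elementary cycle $\sigma$, and the path-extended vectors produced in the final loop for each maximum $J_\sigma$-path. For the cycle case, a direct comparison of the entries of $v^{\sigma(i_k)}$ given by~\eqref{e:vtcomps} (when Algorithm~\ref{algorithm1} completes all $t$ iterations) with the explicit formula defining $\textbf{x}_j$ should show that each such $\textbf{x}_j$ coincides, up to scaling, with some $v^{\sigma(i_k)}$. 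When Algorithm~\ref{algorithm1} terminates early at some iteration $p<t$ because $v^p\in\mathscr{X}_{j_p}$, the produced vector has support $\{j_1,\ldots,j_p\}\subsetneq J_\sigma$, and I would recover the corresponding $\textbf{x}_j$ of Algorithm~\ref{algorithm4} as a max-combination of $v^p$ with other cycle solutions from different starting nodes, exploiting the termination condition $A_{j_p}\otimes v^p\geq (v^p)_{j_p}$. The path case would be handled analogously by comparing Algorithm~\ref{algorithm2} step-by-step with the path-extension loop of Algorithm~\ref{algorithm4}, with the shortcuts of Lemma~\ref{Lemmaforgenera3}(ii) compensated by other generators already present in $\mathscr{X}_G$.

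The main obstacle will be the combinatorial bookkeeping in the early-termination cases. Translating $A_{j_p}\otimes v^p\geq (v^p)_{j_p}$ into the graph-theoretic assertion that there must be an arc from $j_p$ into $\{j_1,\ldots,j_p\}$ closing a nonnegative subcycle of $\sigma$, I expect the ``missing'' $\textbf{x}_j$ to decompose as a max-combination of $v^p$ and the generators associated with that subcycle together with its complementary pieces of $\sigma$. A careful induction on the length of the unvisited portion of $\sigma$ should then finish the argument. Once the cycle case is in hand, the path case should be routine, since each iteration of Algorithm~\ref{algorithm2} produces vectors with coordinate formulas that directly match those from the second loop of Algorithm~\ref{algorithm4}, and the same inductive strategy will carry over.
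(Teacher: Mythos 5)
Your plan follows the paper's own proof essentially step for step: reduce via Proposition~\ref{Pro0.1} to showing that $\mathscr{X}_{SG}$ generates $\mathscr{X}'$, then verify that every generator $\textbf{x}_j$ or $\textbf{x}_1(l_{m-k})$ produced by Algorithm~\ref{algorithm4} is a max-combination of $\mathscr{X}_{G}$, treating the full-iteration cycle case by direct coordinate comparison with~\eqref{e:vtcomps}, the early-termination case by combining $v^{\sigma(i_2)}$ with the vectors $v^{\sigma(i_{p+2})}, v^{\sigma(i_{p+q+2})},\ldots$ obtained by restarting Algorithm~\ref{algorithm1} at later nodes of the same cycle until the support of $\textbf{x}_1$ is covered, and the path case by induction along $\pi$ exactly as in \eqref{general1}--\eqref{general3}. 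The only cosmetic difference is that you motivate the early-termination bookkeeping via the nonnegative subcycle closed by an arc out of $j_p$, whereas the paper never analyses why termination occurred and simply writes down the shifted max-combination; both lead to the same identity \eqref{general}.
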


\begin{proof}
Due to the fact that a set of generators for $\mathscr{X}'$ has been described in \cite{wanghuili2014}, see also \cite{Sergeev2015}, and subsequently a smaller generating set (denoted by $\mathscr{X}_{A}$) for $\mathscr{X}'$ is characterized in \cite{wanghuili2020}, and $\mathscr{X}_{E}$ is the scaled extremal set of $\mathscr{X}_{SG}$, it is sufficient to prove that any generator in $\mathscr{X}_{A}$ is a max-combination
of $\mathscr{X}_{SG}$ or, equivalently, $\mathscr{X}_{G}$.

Note that any generator $\textbf{x}\in \mathscr{X}_{A}$ corresponds to either a nonnegative elementary cycle $\sigma$ in $D_{A}$ or a maximum $J_{\sigma'}$-path $\pi=(l_{1}, l_{2},
\cdots, l_{m})$ in $D_{A}$, where $\sigma'$ is nonnegative elementary cycle in $D_{A}$.

Take any nonnegative elementary cycle $\sigma=(i_{1}, i_{2}, \cdots, i_{t}, i_{t+1}=i_{1})$ in $D_{A}$. Then by Algorithm \ref{algorithm4} we can get $t$ generators (say) $\textbf{x}_{1}, \textbf{x}_{2},\cdots, \textbf{x}_{t}$. By the similarity of the construction for generators $\textbf{x}_{1}, \textbf{x}_{2},\cdots, \textbf{x}_{t}$ we only need to prove that $\textbf{x}_{1}$ is a max-combination
of $\mathscr{X}_{G}$, where $\textbf{x}_{1}=((\textbf{x}_{1})_1,
(\textbf{x}_{1})_2, \cdots, (\textbf{x}_{1})_n)^T\in\overline{\mathbb{R}}^{n}$ can be computed by Algorithm \ref{algorithm4} as follows:

\begin{equation}\label{huili2020}
\begin{aligned}
&(\textbf{x}_{1})_{i_{1}}=0\\
&(\textbf{x}_{1})_{i_{2}}=\omega(\sigma)-a_{i_{1}i_{2}}\\
&(\textbf{x}_{1})_{i_{3}}=\omega(\sigma)-a_{i_{1}i_{2}}-a_{i_{2}i_{3}}\\
&\vdots\\
&(\textbf{x}_{1})_{i_{p}}=\omega(\sigma)-a_{i_{1}i_{2}}-\cdots-a_{i_{p-1}i_{p}}\\
&\vdots\\
&(\textbf{x}_{1})_{i_{t}}=\omega(\sigma)-a_{i_{1}i_{2}}-\cdots-a_{i_{t-1}i_{t}}\\
&(\textbf{x}_{1})_{s}=-\infty \mbox{ }\text{for any}\mbox{ } s\in N\backslash J_{\sigma}
\end{aligned}
\end{equation}

When we implement Algorithm \ref{algorithm1} for $\sigma=\sigma(i_{2})=(i_{2}, i_{3}, \cdots, i_{t}, i_{1}, i_{2})$, it is assumed that $v^{\sigma(i_{2})}$ is obtained by iterating $p$ $(1\leq p\leq t)$ times.

If $p=t$, then by  Algorithm \ref{algorithm1} we can get:
\begin{equation}\label{sigma(i_{2})p=t}
\begin{aligned}
&(v^{\sigma(i_{2})})_{i_{2}}=a_{i_{2}i_{3}}+\cdots+a_{i_{t}i_{1}}\\
&(v^{\sigma(2)})_{i_{3}}=a_{i_{3}i_{4}}+\cdots+a_{i_{t}i_{1}}\\
&\vdots\\
&(v^{\sigma(i_{2})})_{i_{t}}=a_{i_{t}i_{1}}\\
&(v^{\sigma(i_{2})})_{i_{1}}=0\\
&(v^{\sigma(i_{2})})_{s}=-\infty \mbox{ }\text{for any}\mbox{ } s\in N\backslash J_{\sigma}.
\end{aligned}
\end{equation}
By \eqref{huili2020} and \eqref{sigma(i_{2})p=t} we have $\textbf{x}_{1}=v^{\sigma(i_{2})}$,that is, $\textbf{x}_{1}$ can be expressed as a max-combination of $\mathscr{X}_{G}$.

If $p=1$, then $v^{\sigma(i_{2})}=e^{i_{2}}$. If $1<p<t$, then by  Algorithm \ref{algorithm1} we can get:
\begin{equation}\label{sigma(i_{2})p<t}
\begin{aligned}
&(v^{\sigma(i_{2})})_{i_{2}}=a_{i_{2}i_{3}}+\cdots+a_{i_{p}i_{p+1}}\\
&(v^{\sigma(i_{2})})_{i_{3}}=a_{i_{3}i_{4}}+\cdots+a_{i_{p}i_{p+1}}\\
&\vdots\\
&(v^{\sigma(i_{2})})_{i_{p}}=a_{i_{p}i_{p+1}}\\
&(v^{\sigma(i_{2})})_{i_{p+1}}=0\\
&(v^{\sigma(i_{2})})_{s}=-\infty \mbox{ }\text{for any}\mbox{ } s\in N\backslash\{i_{2}, i_{3}, \cdots, i_{p+1}\}.
\end{aligned}
\end{equation}

We further assume that $v^{\sigma(i_{p+2})}$ is obtained by using Algorithm \ref{algorithm1} for $\sigma=\sigma(i_{p+2})=(i_{p+2}, \cdots, i_{t}, i_{1}, \cdots, i_{p+1},i_{p+2})$ in the case of $p<t-1$ or $\sigma=\sigma(i_{1})=(i_{1}, \cdots, i_{t}, i_{1})$ in the case of $p=t-1$, and iterating $q$ $(1\leq q\leq t)$ times in both cases. We only need to prove for $1<p<t-1$ ($p=1$ and $p=t-1$ can be proved similarly). There are three cases:

Case 1. If $p+q=t$, then by  Algorithm \ref{algorithm1} we have:
\begin{equation}\label{sigma(i_{p+2})}
\begin{aligned}
&(v^{\sigma(i_{p+2})})_{i_{p+2}}=a_{i_{p+2}i_{p+3}}+\cdots+a_{i_{t}i_{1}}\\
&\vdots\\
&(v^{\sigma(i_{p+2})})_{i_{t}}=a_{i_{t}i_{1}}\\
&(v^{\sigma(i_{p+2})})_{i_{1}}=0\\
&(v^{\sigma(i_{p+2})})_{s}=-\infty \mbox{ }\text{for any}\mbox{ } s\in N\backslash\{i_{p+2}, i_{p+3}, \cdots, i_{t}, i_{1}\}.
\end{aligned}
\end{equation}
Using (\ref{huili2020}), (\ref{sigma(i_{2})p<t}) and (\ref{sigma(i_{p+2})}) we obtain that $$\textbf{x}_{1}=v^{\sigma(i_{2})}\otimes(a_{i_{p+1}i_{p+2}}+\cdots+a_{i_{t-1}i_{t}}+a_{i_{t}i_{1}})\oplus v^{\sigma(i_{p+2})},$$
that is, $\textbf{x}_{1}$ can be expressed as a max-combination of $\mathscr{X}_{G}$.

Case 2. If $p+q>t$ (obviously $p+q< 2t$), then by  Algorithm \ref{algorithm1} we have:
\begin{equation}\label{sigma(i_{p+2}more)}
\begin{aligned}
&(v^{\sigma(i_{p+2})})_{i_{p+2}}=a_{i_{p+2}i_{p+3}}+\cdots+a_{i_{t-1}i_{t}}+a_{i_{t}i_{1}}+a_{i_{1}i_{2}}+\cdots+a_{i_{p+q-t}i_{p+q-t+1}}\\
&\vdots\\
&(v^{\sigma(i_{p+2})})_{i_{t}}=a_{i_{t}i_{1}}+a_{i_{1}i_{2}}+\cdots+a_{i_{p+q-t}i_{p+q-t+1}}\\
&(v^{\sigma(i_{p+2})})_{i_{1}}=a_{i_{1}i_{2}}+\cdots+a_{i_{p+q-t}i_{p+q-t+1}}\\
&\vdots\\
&(v^{\sigma(i_{p+2})})_{i_{p+q-t}}=a_{i_{p+q-t}i_{p+q-t+1}}\\
&(v^{\sigma(i_{p+2})})_{i_{p+q-t+1}}=0\\
&(v^{\sigma(i_{p+2})})_{s}=-\infty \mbox{ }\text{for any}\mbox{ } s\in N\backslash\{i_{p+2}, \cdots, i_{t}, i_{1},\cdots, i_{p+q-t+1}\}.
\end{aligned}
\end{equation}
Using \eqref{huili2020}, \eqref{sigma(i_{2})p<t}, \eqref{sigma(i_{p+2}more)} and that $w(\sigma)\geq 0$
we obtain that
\begin{equation}\label{sigma(i_{p+2}more1)}
\textbf{x}_{1}=v^{\sigma(i_{2})}\otimes(a_{i_{p+1}i_{p+2}}+\cdots+a_{i_{t-1}i_{t}}+a_{i_{t}i_{1}})\oplus v^{\sigma(i_{p+2})}\otimes(-a_{i_{1}i_{2}}-\cdots-a_{i_{p+q-t}i_{p+q-t+1}}),
\end{equation}
that is, $\textbf{x}_{1}$ can be expressed as a max-combination of $\mathscr{X}_{G}$.

Case 3. If $p+q<t$, then by  Algorithm \ref{algorithm1} we have:

\begin{equation}\label{sigma(p+2morep+q<t-2)}
\begin{aligned}
&(v^{\sigma(i_{p+2})})_{i_{p+2}}=a_{i_{p+2}i_{p+3}}+\cdots+a_{i_{p+q}i_{p+q+1}}\\
&(v^{\sigma(i_{p+2})})_{i_{p+3}}=a_{i_{p+3}i_{p+4}}+\cdots+a_{i_{p+q}i_{p+q+1}}\\
&\vdots\\
&(v^{\sigma(i_{p+2})})_{i_{p+q}}=a_{i_{p+q}i_{p+q+1}}\\
&(v^{\sigma(i_{p+2})})_{i_{p+q+1}}=0\\
&(v^{\sigma(i_{p+2})})_{s}=-\infty \mbox{ }\text{for any}\mbox{ } s\in N\backslash\{i_{p+2}, i_{p+3}, \cdots, i_{p+q+1}\}.
\end{aligned}
\end{equation}

If we continue in this way by considering $v^{\sigma(i_{p+q+2})}$ $\cdots$ then we can always find a finite number of solutions in $\mathscr{X}_{G}$ such that the support of such solutions can cover the support of $\textbf{x}_{1}$. Further $\textbf{x}_{1}$ can be expressed as a max-combination of $\mathscr{X}_{G}$ similarly as in Case 1 and Case 2.  More precisely, it can be shown that in this case and in general we have
\begin{equation}\label{general}
\textbf{x}_{1}=v^{\sigma(i_{2})}\otimes c_{0}\oplus z,
\end{equation}
where $c_{0}$ is a real number satisfying $(\textbf{x}_{1})_{i_{2}}=(v^{\sigma(i_{2})})_{i_{2}}\otimes c_{0}$ and $z$ is a max-combination of $\mathscr{X}_{G}$.

Take any maximum $J_{\sigma'}$-path $\pi=(l_{1}, l_{2},
\cdots, l_{m})$ in $D_{A}$, where $\sigma'$ is nonnegative elementary cycle in $D_{A}$. Then obviously, $m\geq 2$.
Without loss of generality assume that $\sigma'=\sigma=(i_{1}, i_{2}, \cdots, i_{t}, i_{t+1}=i_{1})$ and $l_{m}=i_{2}$. Then by Algorithm \ref{algorithm4} we can get $m-1$ generators corresponding to $\pi$, say, $\textbf{x}_{1}(l_{m-1}),
\textbf{x}_{1}(l_{m-2}), \cdots, \textbf{x}_{1}(l_{1})$, where $\textbf{x}_{1}(l_{m-1})$ satisfying $\text{Supp}(\textbf{x}_{1}(l_{m-1}))=J_{\sigma}\cup\{l_{m-1}\}$ and $(\textbf{x}_{1}(l_{m-1}))_{r}=(\textbf{x}_{1})_{r}$ for any $r\in N$, $r\neq l_{m-1}$ and $(\textbf{x}_{1}(l_{m-1}))_{l_{m-1}}=(\textbf{x}_{1})_{i_{2}}+a_{l_{m-1}i_{2}}$,
$\textbf{x}_{1}(l_{m-2})$ satisfying $\text{Supp}(\textbf{x}_{1}(l_{m-2}))=\text{Supp}(\textbf{x}_{1}(l_{m-1}))\cup\{l_{m-2}\}$ and $(\textbf{x}_{1}(l_{m-2}))_{r}=(\textbf{x}_{1}(l_{m-1}))_{r}$ for any $r\in N$, $r\neq l_{m-2}$ and $(\textbf{x}_{1}(l_{m-2}))_{l_{m-2}}=(\textbf{x}_{1}(l_{m-1}))_{l_{m-1}}+a_{l_{m-2}l_{m-1}}=(\textbf{x}_{1})_{i_{2}}+a_{l_{m-1}i_{2}}+a_{l_{m-2}l_{m-1}}$,
$\cdots$,
$\textbf{x}_{1}(l_{1})$ satisfying $\text{Supp}(\textbf{x}_{1}(l_{1}))=\text{Supp}(\textbf{x}_{1}(l_{2}))\cup\{l_{1}\}$ and $(\textbf{x}_{1}(l_{1}))_{r}=(\textbf{x}_{1}(l_{2}))_{r}$ for any $r\in N$, $r\neq l_{1}$ and $(\textbf{x}_{1}(l_{1}))_{l_{1}}=(\textbf{x}_{1}(l_{2}))_{l_{2}}+a_{l_{1}l_{2}}=\cdots=
(\textbf{x}_{1})_{i_{2}}+a_{l_{m-1}i_{2}}+a_{l_{m-2}l_{m-1}}+\cdots+a_{l_{1}l_{2}}$.

Next we show that each $\textbf{x}_{1}(l_{m-k})$ $(1\leq k\leq m-1)$ can be expressed as a max-combination of $\mathscr{X}_{G}$.

By the previous discussion, there exist some real number $c_{0}$ satisfying $(\textbf{x}_{1})_{i_{2}}=(v^{\sigma(i_{2})})_{i_{2}}\otimes c_{0}$ and a max-combination of $\mathscr{X}_{G}$, say $z$, such that $\textbf{x}_{1}$ can be expressed as in \eqref{general}.

For $k=1$, if $A_{l_{m-1}}\otimes (v^{\sigma(i_{2})}\otimes c_{0})=a_{l_{m-1}i_{2}}+(\textbf{x}_{1})_{i_{2}}$, that is $A_{l_{m-1}}\otimes (v^{\sigma(i_{2})}\otimes c_{0})=({\textbf{x}}_{1}(l_{m-1}))_{l_{m-1}}$, then it is straightforward to verify that
\begin{equation}\label{maxpath2}
{\textbf{x}}_{1}(l_{m-1})=v^{l_{m-1}}\otimes c_{0}\oplus\textbf{x}_{1},
\end{equation}
where
\begin{equation}\label{maxpath3}
v^{l_{m-1}}=v^{\sigma(i_{2})}\oplus(A_{l_{m-1}}\otimes v^{\sigma(i_{2})})\otimes e^{l_{m-1}}.
\end{equation}
It follows from \eqref{general} and \eqref{maxpath2} that ${\textbf{x}}_{1}(l_{m-1})$ can be expressed as a max-combination of $\mathscr{X}_{G}$.

If $A_{l_{m-1}}\otimes (v^{\sigma(i_{2})}\otimes c_{0})\neq a_{l_{m-1}i_{2}}+(\textbf{x}_{1})_{i_{2}}$, that is, $A_{l_{m-1}}\otimes (v^{\sigma(i_{2})}\otimes c_{0})>({\textbf{x}}_{1}(l_{m-1}))_{l_{m-1}}$.  Note that, since $(\textbf{x}_{1})_{i_{2}}=(v^{\sigma(i_{2})})_{i_{2}}\otimes c_{0}$ and $(\textbf{x}_{1}(l_{m-1}))_{l_{m-1}}=(\textbf{x}_{1})_{i_{2}}+a_{l_{m-1}i_{2}}$,
the inequality $A_{l_{m-1}}\otimes (v^{\sigma(i_{2})}\otimes c_{0})<({\textbf{x}}_{1}(l_{m-1}))_{l_{m-1}}$ is impossible.

Set
\begin{equation}\label{maxpath10}
a=A_{l_{m-1}}\otimes (v^{\sigma(i_{2})}\otimes c_{0})-({\textbf{x}}_{1}(l_{m-1}))_{l_{m-1}}.
\end{equation}
Then we have
\begin{equation}\label{maxpath4}
{\textbf{x}}_{1}(l_{m-1})=v^{l_{m-1}}\otimes (c_{0}-a)\oplus{\textbf{x}}_{1},
\end{equation}
where $v^{l_{m-1}}$ is also defined by (\ref{maxpath3}).
It follows from (\ref{general}) and (\ref{maxpath4}) that ${\textbf{x}}_{1}(l_{m-1})$ can be expressed as a max-combination of $\mathscr{X}_{G}$.

By (\ref{maxpath2}) and (\ref{maxpath4}) we can get that, in general,
\begin{equation}\label{general1}
{\textbf{x}}_{1}(l_{m-1})=v^{l_{m-1}}\otimes c_{1}\oplus{\textbf{x}}_{1}=v^{l_{m-1}}\otimes c_{1}\oplus v^{\sigma(i_{2})}\otimes c_{0}\oplus z,
\end{equation}
where $c_{1}$ is a real number satisfying $({\textbf{x}}_{1}(l_{m-1}))_{l_{m-1}}=(v^{l_{m-1}})_{l_{m-1}}\otimes c_{1}$, $c_{0}$ is a real number satisfying $(\textbf{x}_{1})_{i_{2}}=(v^{\sigma(i_{2})})_{i_{2}}\otimes c_{0}$ and $z$ is a max-combination of $\mathscr{X}_{G}$.

For $k=2$, if $A_{l_{m-2}}\otimes (v^{l_{m-1}}\otimes c_{1})=a_{l_{m-2}l_{m-1}}+({\textbf{x}}_{1}(l_{m-1}))_{l_{m-1}}=({\textbf{x}}_{1}(l_{m-2}))_{l_{m-2}}$, then
\begin{equation}\label{maxpath5}
{\textbf{x}}_{1}(l_{m-2})=v^{l_{m-2}}\otimes c_{1}\oplus{\textbf{x}}_{1}(l_{m-1}),
\end{equation}
where
\begin{equation}\label{maxpath6}
v^{l_{m-2}}=v^{l_{m-1}}\oplus(A_{l_{m-2}}\otimes v^{l_{m-1}})\otimes e^{l_{m-2}}.
\end{equation}
It follows from (\ref{general1}) and (\ref{maxpath5}) that ${\textbf{x}}_{1}(l_{m-2})$ can be expressed as a max-combination of $\mathscr{X}_{G}$.

If $A_{l_{m-2}}\otimes (v^{l_{m-1}}\otimes c_{1})>a_{l_{m-2}l_{m-1}}+({\textbf{x}}_{1}(l_{m-1}))_{l_{m-1}}=({\textbf{x}}_{1}(l_{m-2}))_{l_{m-2}}$. Set $b=A_{l_{m-2}}\otimes (v^{l_{m-1}}\otimes c_{1})-(a_{l_{m-2}l_{m-1}}+({\textbf{x}}_{1}(l_{m-1}))_{l_{m-1}})=A_{l_{m-2}}\otimes (v^{l_{m-1}}\otimes c_{1})-({\textbf{x}}_{1}(l_{m-2}))_{l_{m-2}}$. Then
\begin{equation}\label{maxpath8}
{\textbf{x}}_{1}(l_{m-2})=v^{l_{m-2}}\otimes (c_{1}-b)\oplus{\textbf{x}}_{1}(l_{m-1}),
\end{equation}
where $v^{l_{m-2}}$ is also defined by (\ref{maxpath6}).
It follows from (\ref{general1}) and (\ref{maxpath8}) that ${\textbf{x}}_{1}(l_{m-2})$ can be expressed as a max-combination of $\mathscr{X}_{G}$.

By (\ref{maxpath5}) and (\ref{maxpath8}) we can get that, in general,
\begin{equation}\label{general2}
{\textbf{x}}_{1}(l_{m-2})=v^{l_{m-2}}\otimes c_{2}\oplus {\textbf{x}}_{1}(l_{m-1})=v^{l_{m-2}}\otimes c_{2}\oplus v^{l_{m-1}}\otimes c_{1}\oplus v^{\sigma(i_{2})}\otimes c_{0}\oplus z,
\end{equation}
where $c_{k}$ is a real number satisfying $({\textbf{x}}_{1}(l_{m-k}))_{l_{m-k}}=(v^{l_{m-k}})_{l_{m-k}}\otimes c_{k}$ for $k=1, 2$, $c_{0}$ is a real number satisfying $(\textbf{x}_{1})_{i_{2}}=(v^{\sigma(i_{2})})_{i_{2}}\otimes c_{0}$ and $z$ is a max-combination of $\mathscr{X}_{G}$.

If we continue in this way then we can obtain successively for any $k=3, \cdots, m-1$ (if $m\geq 4$):
\begin{equation}\label{general3}
{\textbf{x}}_{1}(l_{m-k})=\sum^{\oplus}_{1\leq h\leq k} v^{l_{m-h}}\otimes c_{h}\oplus v^{\sigma(i_{2})}\otimes c_{0}\oplus z,
\end{equation}
where $c_{h}$ is a real number satisfying $({\textbf{x}}_{1}(l_{m-h}))_{l_{m-h}}=(v^{l_{m-h}})_{l_{m-h}}\otimes c_{h}$ for any $1\leq h\leq k$, $c_{0}$ is a real number satisfying $(\textbf{x}_{1})_{i_{2}}=(v^{\sigma(i_{2})})_{i_{2}}\otimes c_{0}$ and $z$ is a max-combination of $\mathscr{X}_{G}$.


Thus any generator in the generating set $\mathscr{X}_{A}$ for $\mathscr{X}'$ is a max-combination
of $\mathscr{X}_{G}$ (equivalently $\mathscr{X}_{SG}$), together with $\mathscr{X}_{E}$ is the scaled extremal set of $\mathscr{X}_{SG}$, yields that $\mathscr{X}_{E}$ is the scaled basis of $\mathscr{X}'$.
\end{proof}

Let us now give an example illustrating the work of Algorithms~\ref{algorithm1}, \ref{algorithm2} and \ref{algorithm3}.

\begin{example}
\label{Ex4.1}
{\rm Consider the system $A\otimes \textbf{x}\geq\textbf{x}$, where
$$A=\left(\begin{array}{ccccc}
            -3 & 1 & -\infty & -\infty & -\infty \\
            1 & 1 & 1 & -\infty & -\infty \\
            -\infty & 0 & -\infty & 2 & -\infty \\
            1 & -\infty & -5 & -\infty & -7 \\
            -2 & -2 & -7 & 1 & -\infty
          \end{array}
   \right).$$

By using the tropical double description method of Allamigeon, Gaubert and Goubault \cite{Allamigeon2013} or \cite[Algorithm 3.2]{wanghuili2014} or \cite[Algorithm 4.1]{wanghuili2020} we can get that the scaled basis of $\mathscr{X}'$ is
\begin{equation*}
\begin{split}
& \{e^{2},\  (0, -1, -\infty, -\infty, -\infty)^{T},\  (-\infty, 0, 0, -\infty, -\infty)^{T},\\
& (-\infty, 0, 0, -5, -\infty)^{T},\  (-3, -4, 0, -2, -\infty)^{T},\  (-\infty, 0, -\infty, -\infty, -2)^{T},\\
& (0, -1, -\infty, -\infty, -2)^{T},\  (-\infty, 0, -\infty, -9, -2)^{T},\\
&\qquad\qquad  (-1, -2, -\infty, 0, -\infty)^{T},\  (-2, -3, -\infty, -1, 0)^{T}\}.
\end{split}
\end{equation*}

Next we use our method to obtain these extremals. There are four nonnegative elementary cycles in $D_{A}$: $\sigma_{1}=(2, 2)$, $\sigma_{2}=(2, 1, 2)$, $\sigma_{3}=(2, 3, 2)$, $\sigma_{4}=(2, 3, 4, 1, 2)$. For the nonnegative elementary cycle $\sigma_{1}$ we have the following maximum $J_{\sigma_{1}}$-paths in $D_{A}$: $\pi_{1}=(5, 3, 4, 1, 2)$, $\pi_{2}=(5, 4, 1, 2)$, $\pi_{3}=(3, 4, 5, 1, 2)$, $\pi_{4}=(5, 4, 3, 2)$, $\pi_{5}=(4, 5, 3, 2)$, $\pi_{6}=(3, 4, 5, 2)$.




Applying Algorithm~\ref{algorithm1} to nonnegative elementary cycle $\sigma_{1}=(2, 2)$, we obtain the extremal generator {\color{blue} $e^{2}$}.

Applying Algorithm~\ref{algorithm2} to the maximum $J_{\sigma_{1}}$-path $\pi_{1}=(5, 3, 4, 1, 2)$ we first obtain
$v^{1}=e^{2}\oplus(A_{1}\otimes e^{2})\otimes
e^{1}=(1, 0, -\infty, -\infty, -\infty)^{T}.$
This is indeed an extremal, as it can be verified using \cite[Theorem 1]{Allamigeon2013} or \cite[Theorem 3.1]{SW-21}, and
the corresponding scaled extremal is {\color{blue} $(0, -1, -\infty, -\infty, -\infty)^{T}$}. The next generator is
$v^{4}=v^{1}\oplus(A_{4}\otimes v^{1})\otimes
e^{4}=(1, 0, -\infty, 2, -\infty)^{T},$ which is again an extremal, and the correponding scaled extremal is {\color{blue} $(-1, -2, -\infty, 0, -\infty)^{T}$}. The next generator is
$v^{3}=v^{4}\oplus(A_{3}\otimes v^{4})\otimes
e^{3}=(1, 0, 4, 2, -\infty)^{T}$, which is again an extremal,  and after scaling it becomes {\color{blue} $(-3, -4, 0, -2, -\infty)^{T}$}. Finally we compute
$v^{5}=v^{3}\oplus(A_{5}\otimes v^{3})\otimes
e^{5}=(1, 0, 4, 2, 3)^{T}$.  However, this is not an extremal and we turn to the next $J_{\sigma_{1}}$-path.

Applying Algorithm~\ref{algorithm2} to the maximum $J_{\sigma_{1}}$-path $\pi_{2}=(5, 4, 1, 2)$ we first obtain $v^{1}$ and $v^{4}$ that have been already computed. After this we compute new vector $v^{5}=v^{4}\oplus(A_{5}\otimes v^{4})\otimes
e^{5}=(1, 0, -\infty, 2, 3)^{T}$, which appears to be an extremal, and after scaling it becomes {\color{blue} $(-2, -3,  -\infty, -1, 0)^{T}$}.

Next we turn to the maximum $J_{\sigma_{1}}$-path $\pi_{3}=(3, 4, 5, 1, 2)$.
Note that $v^{1}$ has been obtained in the previous discussion of $J_{\sigma_{1}}$-path $\pi_{1}=(5, 3, 4, 1, 2)$. Then we compute the new vector
$v^{5}=v^{1}\oplus(A_{5}\otimes v^{1})\otimes
e^{5}=(1, 0, -\infty, -\infty, -1)^{T},$ which corresponds to the scaled extremal {\color{blue} $(0, -1,  -\infty, -\infty, -2)^{T}$}. As for the next generator
$v^{4}=v^{5}\oplus(A_{4}\otimes v^{5})\otimes
e^{4}=(1, 0, -\infty, 2, -1)^{T},$ it is not an extremal and we go to the next $J_{\sigma_1}$-path.

Next we turn to the maximum $J_{\sigma_{1}}$-path $\pi_{4}=(5, 4, 3, 2)$. We first obtain
$v^{3}=e^{2}\oplus(A_{3}\otimes e^{2})\otimes
e^{3}$, which is the scaled extremal {\color{blue} $(-\infty, 0, 0, -\infty, -\infty)^{T}$}. We next obtain
$v^{4}=v^{3}\oplus(A_{4}\otimes v^{3})\otimes
e^{4}=(-\infty, 0, 0, -5, -\infty)^{T}$, again an extremal, becoming {\color{blue} $(-\infty, 0, 0, -5, -\infty)^{T}$} after scaling.
Finally we compute
$v^{5}=v^{4}\oplus(A_{5}\otimes v^{4})\otimes
e^{5}=(-\infty, 0, 0, -5, -2)^{T}$, but this vector is not an extremal.

Working with the maximum $J_{\sigma_{1}}$-path $\pi_{5}=(4, 5, 3, 2)$, we first compute
$v^{3}$, but this is the same $v^3$ which we had for the previous $J_{\sigma_{1}}$-path $\pi_{4}=(5, 4, 3, 2)$. Then we compute
$v^{5}=v^{3}\oplus(A_{5}\otimes v^{3})\otimes
e^{5}=(-\infty, 0, 0, -\infty, -2)^{T}$, but this is not an extremal, so this path has not given us any new extremal generators and we have to turn to the
remaining path $\pi_6$.

Working with the maximum $J_{\sigma_{1}}$-path $\pi_{6}=(3, 4, 5, 2)$ we first obtain
$v^{5}=e^{2}\oplus(A_{5}\otimes e^{2})\otimes
e^{5}=(-\infty, 0, -\infty, -\infty, -2)^{T}$, which is an extremal and it becomes {\color{blue} $(-\infty, 0, -\infty, -\infty, -2)^{T}$} after scaling.
Then we compute  $v^{4}=v^{5}\oplus(A_{4}\otimes v^{5})\otimes
e^{4}=(-\infty, 0, -\infty, -9, -2)^{T}$, which is an extremal and it becomes {\color{blue} $(-\infty, 0, -\infty, -9, -2)^{T}$} after scaling. Then we compute
$v^{3}=v^{4}\oplus(A_{3}\otimes v^{4})\otimes
e^{3}=(-\infty, 0, 0, -9, -2)^{T}$, but this is not an extremal.

Thus we have exhausted the set of all maximal $J_{\sigma_1}$-paths leading to the loop $\sigma_1=(2,2)$. Next,
for the cycle $\sigma_2=(2,1,2)$ Algorithm~\ref{algorithm1} produces $v^{\sigma_2(1)}=(1, 0, -\infty, -\infty, -\infty)^{T}$ and $v^{\sigma_2(2)}=e^2$. Both of these extremals were generated above, and considering the maximal $J_{\sigma_2}$-paths cannot yield any new extremals.  For the cycle $\sigma_3=(2,3,2)$ observe that $v^{\sigma_3(2)}=e^2$ and $v^{\sigma_3(3)}=(-\infty, 0, 0, -\infty, -\infty)^{T}$, and we have the same observation as for $\sigma_2$.
For the cycle $\sigma_4=(2,3,4,1,2)$ we have $v^{\sigma_4(2)}=e^2$, $v^{\sigma_4(1)}=(1, 0, -\infty, -\infty, -\infty)^{T},$ $v^{\sigma_4(4)}=(1, 0, -\infty, 2, -\infty)^{T}$ and
$v^{\sigma_4(3)}=(1, 0, 4, 2, -\infty)^{T}$: all of them are extremals, but they were previously computed and considering the $J_{\sigma_4}$-paths cannot yield any new extremals in this case, either.}
\end{example}

Thus Algorithm \ref{algorithm3}, which combines tropical double description method and \cite[Algorithm 4.1]{wanghuili2020}, provides a new method to find the scaled basis for the solution space of the system \eqref{EqSupnolam} (or \eqref{Foralgori}) with $A\in\overline{\mathbb{R}}^{n\times n}$.
At least  theoretically, the computational complexity of Algorithm \ref{algorithm3} is still high since it is also based on all nonnegative elementary cycles and maximum $J_{\sigma}$-paths in $D_{A}$ (see \cite{wanghuili2020}). We see that (at least) all these cycles have to be examined, and that repeated extremals often appear in the process.  However, it is worth noting that Algorithm \ref{algorithm3} also has a mechanism of pruning the whole branches of generators that are not extremal and this mechanism may help to reduce the running time. It nevertheless remains a compelling challenge for the future work to avoid repeating extremals and further enhance the pruning mechanism of the algorithms presented here, in order to work out a more efficient method.

\end{document}